\newcommand{\R}{\mathbb{R}}
\newcommand{\N}{\mathbb{N}}
\newcommand{\lf}{\lfloor}
\newcommand{\rf}{\rfloor}
\newcommand\numberthis{\addtocounter{equation}{1}\tag{\theequation}}
\newtheorem{theorem}{Theorem}[section]
\newtheorem{lemma}[theorem]{Lemma}
\newtheorem{cor}[theorem]{Corollary}
\let\inf\relax \DeclareMathOperator*\inf{\vphantom{p}inf}
\let\lim\relax \DeclareMathOperator*\lim{\vphantom{p}lim}
\begin{document}

\title{Existence and uniqueness properties\\
	 for solutions of a class of Banach\\
	  space valued evolution equations}

\author{Arnulf Jentzen, Sara Mazzonetto, and Diyora Salimova
}

\maketitle

\begin{abstract}
In this note we provide a self-contained proof of an existence and uniqueness result for a class of Banach space valued evolution equations with an additive forcing term. The  framework of our abstract result includes, for example, finite dimensional ordinary differential equations (ODEs), semilinear deterministic partial differential equations (PDEs), as well as certain additive noise driven stochastic partial differential equations (SPDEs) as special cases. The  framework of our general result assumes somehow mild regularity conditions on the involved semigroup and also allows the involved semigroup operators to be nonlinear. The techniques used in the proofs of our results are essentially well-known in the relevant literature. The  contribution of this note is to provide a rather general existence and uniqueness result which covers several situations as special cases and also to provide a self-contained proof for this existence and uniqueness result.
\end{abstract}

\tableofcontents

\section{Introduction}

In this paper we prove a general existence and uniqueness result for a class of Banach space valued evolution equations with an additive forcing term.   The  framework of our abstract result is quite general and includes, for instance, finite dimensional ordinary differential equations (ODEs), semilinear deterministic partial differential equations (PDEs), and  certain additive noise driven stochastic partial differential equations (SPDEs) as special cases. Moreover, the framework of our result assumes only mild regularity conditions on the involved semigroup and also allows the involved semigroup operators to be nonlinear. The tools used in the proofs of our results are basically well-known in the relevant literature. The main contribution of this paper is to 
provide an existence and uniqueness result which can be applied to different evolution equations 
and additionally, to present a self-contained proof for this existence and uniqueness result.

To illustrate the main result of this article, Corollary~\ref{cor:unique} in Section~\ref{sec:main_result} below, in more detail, we now present in the following theorem, Theorem~\ref{thm:intro} below, a special case of our main result.

\begin{theorem}
\label{thm:intro}
Let $(V, \left\|\cdot\right\|_V)$  and $(W,\left\|\cdot\right\|_W)$
be  separable $\mathbb{R}$-Banach spaces,
let $T \in (0, \infty)$,  $F \in C(V, W)$, $o \in C([0,T],V)$,  
let $S \colon (0,T) \to L(W,V)$ be a $\mathcal{B}((0,T))\slash \mathcal{B}(L(W,V))$-measurable function, 
let $\mathcal{S} \colon [0,T] \to L(V)$ be a $\mathcal{B}([0,T])\slash \mathcal{B}(L(V))$-measurable function, 
and assume for all $r \in [0,\infty)$, $t_1 \in [0,T)$, $t_2\in (0,T-t_1)$, 
$v \in V$ that 
$([0,T] \ni t  \mapsto \mathcal{S}_t v \in V) \in C([0,T],V)$,
$S_{t_1+t_2}=\mathcal{S}_{t_1} S_{t_2}$,  and
$ \sup \big( \big\{\frac{\|F(v)-F(w)\|_W}{\|v-w\|_V} \colon v, w \in V, v \neq w, \|v\|_V + \|w\|_V \leq r\big\} \cup \{0\}  \big) + 
\inf_{\alpha, \rho \in (0,1)} \sup_{s \in (0,T), t \in (s,T)}  [s^{\alpha} (\|S_s\|_{L(W,V)} +
\|S_{t} - S_{s} \|_{L(W,V)} |t-s|^{-\rho})] < \infty$.
Then there exists a unique convex set $J \subseteq [0,T]$ with $\{0\} \subsetneq J$ such that
\begin{enumerate}[(i)]
\item there exists a unique  $x \in C(J, V)$  which satisfies for all $t \in J$ that 
\begin{align}
\int_0^t \|S_{t-s} \, F(x_s) \|_V \, ds < \limsup_{s \nearrow \sup(J)} \left[\frac{1}{(T-s)}+ \|x_s\|_V\right]\!= \infty
\end{align}
and $x_t = \int_0^t S_{t-s} \, F(x_s) \, ds + o_t$, 
\item for all convex sets $I \subseteq [0,T]$  and all  $y \in C(I, V)$  with $I \supseteq J $,
$\forall \, t \in I \colon \int_0^t \|S_{t-s} \, F(y_s) \|_V \, ds < \infty$, and 
$ \forall \, t \in I \colon  y_t = \int_0^t S_{t-s} \, F(y_s) \, ds + o_t$ it holds that $x=y$, and
\item for all  $y \in C(J, V)$   with $\forall \, t \in J \colon \int_0^t \|S_{t-s} \, F(y_s) \|_V \, ds < \infty$,  
$ \forall \, t \in J \colon  y_t = \int_0^t S_{t-s} \, F(y_s) \, ds + o_t$, and $\limsup_{s \nearrow \sup(J)} \|y_s\|_V < \infty$  it holds that $J = [0, T]$.
\end{enumerate}
\end{theorem}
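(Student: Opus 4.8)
The plan is to prove the result by the classical mild-solution fixed-point method: extract the quantitative regularity hidden in the hypothesis, build a unique local solution by a contraction argument, and then glue local solutions into a maximal one whose right endpoint is governed by a blow-up dichotomy. First I would record the constants furnished by the assumption. The finiteness of the infimum over $\alpha,\rho$ yields $\alpha,\rho\in(0,1)$ and $C\in[0,\infty)$ with $\|S_s\|_{L(W,V)}\le C\,s^{-\alpha}$ and $\|S_t-S_s\|_{L(W,V)}\le C\,s^{-\alpha}|t-s|^{\rho}$ for all $0<s<t<T$, and the first supremum shows that $F$ is Lipschitz on every bounded subset of $V$. Since $\alpha<1$ gives $\int_0^t(t-s)^{-\alpha}\,ds=\frac{t^{1-\alpha}}{1-\alpha}<\infty$, for any continuous $y$ the integrand $s\mapsto S_{t-s}F(y_s)$ is Bochner integrable (separability securing measurability), so the integrability clauses requiring $\int_0^t\|S_{t-s}F(y_s)\|_V\,ds<\infty$ in (i)--(iii) hold automatically for every fixed $t$ in the (convex) domain.

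Next I would establish local existence and uniqueness. For a continuous forcing $g\in C([0,\tau],V)$ I consider $\Phi(y)_t=\int_0^t S_{t-s}F(y_s)\,ds+g_t$ on a closed ball $B\subseteq C([0,\tau],V)$ around the constant function $g_0$. The Hölder bound on $S$ is exactly what makes $t\mapsto\int_0^t S_{t-s}F(y_s)\,ds$ an element of $C([0,\tau],V)$: splitting the difference of two such integrals into a short piece $\int_{t_1}^{t_2}S_{t_2-s}F(y_s)\,ds$, controlled via $(t_2-s)^{-\alpha}$, and a remainder $\int_0^{t_1}(S_{t_2-s}-S_{t_1-s})F(y_s)\,ds$, controlled via $s^{-\alpha}|t_2-t_1|^{\rho}$, shows both vanish as $t_2\to t_1$. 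Using $\|S_s\|_{L(W,V)}\le C s^{-\alpha}$ together with the Lipschitz datum of $F$ on $B$, one checks that for $\tau$ sufficiently small $\Phi$ maps $B$ into itself and is a contraction, so Banach's fixed-point theorem produces a unique mild solution on $[0,\tau]$; importantly $\tau$ admits a lower bound depending only on a bound for the data.

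I would then assemble the maximal solution. Taking $g=o$ gives a local solution at $0$, so the collection of convex sets $K\subseteq[0,T]$ with $0\in K$ carrying a mild solution is nonempty and contains a set strictly larger than $\{0\}$. Local uniqueness plus a connectedness argument shows that any two solutions agree on the overlap of their domains, so $J:=\bigcup K$ supports a well-defined $x\in C(J,V)$; this gives $\{0\}\subsetneq J$, the uniqueness in (i), and---because any solution on $I\supseteq J$ is itself a member of the family---the maximality statement (ii). The crucial structural tool is the restart identity: if $x$ solves on $[0,a]$, the semigroup relation $S_{t-s}=\mathcal S_{t-a}S_{a-s}$ gives $\int_0^a S_{t-s}F(x_s)\,ds=\mathcal S_{t-a}(x_a-o_a)$, so for $t\ge a$ the equation reads $x_t=\mathcal S_{t-a}(x_a-o_a)+\int_a^t S_{t-s}F(x_s)\,ds+o_t$, which after the shift $s=a+\sigma$ is again of the original form with continuous forcing $\tilde o_\tau=\mathcal S_\tau(x_a-o_a)+o_{a+\tau}$ (continuous by strong continuity of $\mathcal S$).

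Finally I would settle the blow-up dichotomy. If $\sup J<T$ and $\limsup_{s\nearrow\sup J}\|x_s\|_V<\infty$, then $x$ is bounded near $\sup J$ and the restart identity, combined with the data-dependent lower bound on the local existence time, extends $x$ strictly beyond $\sup J$, contradicting maximality; hence $\limsup_{s\nearrow\sup J}[\frac{1}{T-s}+\|x_s\|_V]=\infty$, the remaining clause of (i), and the same extension up to the closed endpoint yields $J=[0,T]$ under the boundedness hypothesis of (iii). The main obstacle I expect is the analytic core of the local step---verifying that the singular convolution $t\mapsto\int_0^t S_{t-s}F(y_s)\,ds$ is $V$-valued continuous and that the self-mapping and contraction estimates close---since this is precisely where both quantitative properties of $S$, the integrable singularity $\alpha<1$ and the Hölder modulus $\rho$, must be used in tandem; a secondary care point is the bookkeeping of the right endpoint of $J$ (open versus closed, and $\sup J=T$ versus $T\in J$) required to land exactly on statements (i)--(iii).
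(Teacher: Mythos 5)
Your proposal follows the same overall architecture as the paper: Banach fixed-point contraction for local existence, a maximal domain built as the union of all domains carrying solutions, the restart identity $\int_0^a S_{t-s}F(x_s)\,ds=\mathcal{S}_{t-a}(x_a-o_a)$ obtained from $S_{t_1+t_2}=\mathcal{S}_{t_1}S_{t_2}$, and a blow-up dichotomy proved by contradiction. Two of your technical routes, however, genuinely differ. First, you derive uniqueness from the fixed-point contraction plus an open-closed connectedness argument along the interval, whereas the paper proves uniqueness (Corollary~\ref{cor:local:unique}) from the quantitative perturbation estimate of Lemma~\ref{lemma:perturbation}, which rests on the generalized Gronwall inequality of Henry; your route is more elementary but needs the semigroup identity to propagate agreement, while the paper's uniqueness holds for arbitrary measurable $S$ with the $\alpha$-bound and yields a reusable stability estimate. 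Second, for the extension step you restart at interior points $a<\sup(J)$ and rely on a uniform lower bound for the local existence time to jump past $\sup(J)$ (and, for item (iii), to reach the closed endpoint $T$), whereas the paper argues in two stages: a maximal domain must be half-open (Lemma~\ref{lemma:open}), and a bounded solution on a half-open interval extends continuously to the closed endpoint (Lemma~\ref{lemma:extension}) --- and it is only in Lemma~\ref{lemma:extension} that the H\"older-type hypothesis on $\|S_t-S_s\|_{L(W,V)}$ is actually used. Relatedly, you invoke that H\"older hypothesis to prove continuity of $t\mapsto\int_0^t S_{t-s}F(y_s)\,ds$, but Corollary~\ref{cor:mild.continuity} shows this continuity already follows from $\sup_t t^\alpha\|S_t\|_{L(W,V)}<\infty$ alone, via the substitution $\int_0^t S_{t-s}\,y_s\,ds=\int_0^t S_s\,y_{t-s}\,ds$ and dominated convergence.

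There is one genuine gap, and it sits exactly under your load-bearing claim that the local existence time ``admits a lower bound depending only on a bound for the data.'' With your ball $B$ centered at the \emph{constant} function $g_0$, the self-mapping property $\Phi(B)\subseteq B$ forces $\sup_{t\leq\tau}\|g_t-g_0\|_V$ to be small, so $\tau$ depends on the modulus of continuity of the forcing $g$ at $0^+$, not only on norm bounds. This is fatal where you use the claim: the restart forcings $g^{(a)}_t=\mathcal{S}_t(x_a-o_a)+o_{a+t}$, $a\nearrow\sup(J)$, are in general \emph{not} equicontinuous at $0$, since $\mathcal{S}$ is only strongly continuous and the vectors $x_a-o_a$ are merely bounded; whether $x_a$ converges as $a\nearrow\sup(J)$ is precisely what the paper's Lemma~\ref{lemma:extension} establishes (using the H\"older hypothesis), so you may not assume it. The repair is exactly the paper's choice in the proof of Theorem~\ref{thm:existence}: center the ball at $0$ with radius $R+1$, $R=\sup_t\|g_t\|_V$ (or any radius dominating $2R$), so that the self-map and contraction constants involve only $R$, the local Lipschitz function $\Psi$, $\|F(0)\|_W$, and $\sup_s s^\alpha\|S_s\|_{L(W,V)}$; the needed uniformity over the restart family then follows because $\sup_{t\in[0,T]}\|\mathcal{S}_t\|_{L(V)}<\infty$ by the uniform boundedness principle and $x$ is bounded on $J$ under the contradiction hypothesis. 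With this correction your argument closes, and it then confines the H\"older hypothesis to the continuity step, where, as noted, it could be dispensed with altogether.
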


We note that Theorem~\ref{thm:intro} is an immediate consequence of Corollary~\ref{cor:unique} below. 
Existence and uniqueness for evolution equations have been extensively studied in the literature
(see,  e.g., 
Lions~\cite{lions1969quelques}, 
Weissler~\cite{weissler1979semilinear},
Da Prato \& Zabczyk~\cite{dz92,dz96},
Brze{\'z}niak~\cite{brzezniak1997stochastic},
Cazaneve \& Haraux~\cite{cazenave1998introduction},
Sell \& You~\cite{sy02}, 
Van Nerven, Veraar, \& Weis~\cite{Neerven2008,Neerven2012},
Brezis~\cite{Brezis2011}, 
Jentzen \& Kloeden~\cite{JentzenKloeden2011},
Lunardi~\cite{lunardi2012analytic},
Pazy~\cite{pazy2012semigroups}, 
Roub{\'\i}{\v{c}}ek~\cite{roubivcek2013nonlinear},
E et al.~\cite{EJentzenShen2016}
and the references mentioned therein).

The remainder of this article is organized as follows. In Section~\ref{sec:complete} we  recall in 
Lemma~\ref{lem:complete},
Lemma~\ref{lem:closed},
Corollary~\ref{cor:complete}, and Corollary~\ref{cor:complete:R}
some well-known facts about the completeness of certain function spaces. For completeness we also present proofs for Lemma~\ref{lem:complete},
Lemma~\ref{lem:closed},
Corollary~\ref{cor:complete}, and Corollary~\ref{cor:complete:R}.
Corollary~\ref{cor:complete:R} is used in the proof of our existence and uniqueness result in Theorem~\ref{thm:existence} in Section~\ref{sec:local} below
to ensure that the considered function space is complete so that we are in the position to apply the Banach fixed-point theorem. 
In Section~\ref{sec:measurability} we recall in Lemma~\ref{lem:reg:w} and Lemma~\ref{lem:regularity} some well-known facts on the Bochner integrability of certain functions involving suitable semigroups. 
In Section~\ref{sec:continuity} we present in Corollary~\ref{cor:mild.continuity}  an elementary result about the continuity  of mild solutions of certain nonlinear evolution equations.
In  Section~\ref{sec:perturbation} we establish in Lemma~\ref{lemma:perturbation}  a suitable perturbation estimate for mild solutions of nonlinear evolution equations. Lemma~\ref{lemma:perturbation} is an appropriate extended version of Andersson et al.~\cite[Proposition~2.7]{Andersson2015} and 
Jentzen \& Kurniawan~\cite[Corollary~3.1]{JentzenKurniawan2015}.
In Section~\ref{sec:unique}  we present in Corollary~\ref{cor:local:unique} an elementary fact about uniqueness of mild solutions of certain nonlinear evolution equations. 
In Sections~\ref{sec:local} and~\ref{sec:main_result} we employ well-known techniques from the literature on evolution equations to establish in Theorem~\ref{thm:existence}, Theorem~\ref{thm:unique}, and 
Corollary~\ref{cor:unique} the unique local existence of maximal mild solutions of the considered nonlinear evolution equations.

\section{Complete function spaces}
\label{sec:complete}

In this section we  recall in 
Lemma~\ref{lem:complete},
Lemma~\ref{lem:closed},
Corollary~\ref{cor:complete}, and Corollary~\ref{cor:complete:R}
some well-known facts about the completeness of certain function spaces. For completeness we also present proofs for Lemma~\ref{lem:complete},
Lemma~\ref{lem:closed},
Corollary~\ref{cor:complete}, and Corollary~\ref{cor:complete:R} in this section.

\begin{lemma}
\label{lem:complete}
Let $X$ be a non-empty set, 
let $(E, d)$ be a complete metric space, 
let $\mathcal{E}$ be the set 
given by 
\begin{equation}
  \mathcal{E}= \left\{
    f \colon X \to E \colon 
    \inf\nolimits_{e \in E} 
    \sup\nolimits_{x \in X} d(f(x),e)<\infty
  \right\}
 \! , 
\end{equation}
and let 
$ \delta \colon \mathcal{E} \times \mathcal{E} \to [0,\infty) $ 
be the function which satisfies 
for all $f, g \in \mathcal{E}$ that $\delta(f,g) = \sup_{x \in X}d(f(x),g(x))$. 
Then it holds that the pair $(\mathcal{E}, \delta)$ is a complete metric space.
\end{lemma}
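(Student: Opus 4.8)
The plan is to verify first that $\delta$ is a genuine metric on $\mathcal{E}$ and then establish completeness via the standard pointwise-limit argument for spaces of bounded functions. For the metric axioms I would proceed as follows. First I need $\delta$ to be finite on $\mathcal{E} \times \mathcal{E}$: given $f, g \in \mathcal{E}$, the defining condition supplies points $e, e' \in E$ with $\sup_{x} d(f(x), e) < \infty$ and $\sup_{x} d(g(x), e') < \infty$, and then the triangle inequality in $E$ gives $d(f(x), g(x)) \le d(f(x), e) + d(e, e') + d(e', g(x))$, whose supremum over $x$ is finite. Symmetry $\delta(f,g) = \delta(g,f)$ is immediate from symmetry of $d$, and $\delta(f,g) = 0 \iff d(f(x),g(x)) = 0$ for all $x \iff f = g$. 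The triangle inequality $\delta(f,h) \le \delta(f,g) + \delta(g,h)$ follows by taking suprema in the pointwise triangle inequality $d(f(x),h(x)) \le d(f(x),g(x)) + d(g(x),h(x))$.

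For completeness, I would take an arbitrary Cauchy sequence $(f_n)_{n \in \mathbb{N}}$ in $(\mathcal{E}, \delta)$ and proceed in three stages. \textbf{Construct the candidate limit.} For each fixed $x \in X$ one has $d(f_n(x), f_m(x)) \le \delta(f_n, f_m)$, so $(f_n(x))_{n \in \mathbb{N}}$ is Cauchy in the complete space $(E,d)$ and therefore converges; define $f(x) := \lim_{n \to \infty} f_n(x)$. \textbf{Show uniform convergence.} Given $\varepsilon > 0$, choose $N$ with $\delta(f_n, f_m) \le \varepsilon$ for all $n, m \ge N$; then for each $x$ and each $n \ge N$, letting $m \to \infty$ and using continuity of $d$ in its second argument yields $d(f_n(x), f(x)) = \lim_{m \to \infty} d(f_n(x), f_m(x)) \le \varepsilon$, so $\sup_{x \in X} d(f_n(x), f(x)) \le \varepsilon$. \textbf{Show the limit lies in $\mathcal{E}$.} Picking $n = N$ and any $e \in E$ with $\sup_x d(f_N(x), e) < \infty$ (which exists since $f_N \in \mathcal{E}$), the triangle inequality gives $\sup_x d(f(x), e) \le \sup_x d(f(x), f_N(x)) + \sup_x d(f_N(x), e) < \infty$, so $f \in \mathcal{E}$. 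Combining the last two stages, $\delta(f_n, f) \le \varepsilon$ for all $n \ge N$, i.e.\ $f_n \to f$ in $(\mathcal{E}, \delta)$.

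I expect the only genuinely delicate points to be bookkeeping rather than conceptual: namely, confirming at the outset that $\delta$ takes finite values (so that it is a well-defined metric on $\mathcal{E}$ rather than an extended metric) and, symmetrically, confirming at the end that the pointwise limit $f$ actually belongs to $\mathcal{E}$. Both reduce to a triangle-inequality estimate anchored at a fixed reference point $e \in E$ coming from the boundedness-up-to-translation condition defining $\mathcal{E}$. The convergence argument itself is the classical ``a uniformly Cauchy sequence of functions into a complete space converges uniformly to a function of the same boundedness type,'' and the one step meriting care is the interchange justified by continuity of the metric $d$ when passing $m \to \infty$ inside the inequality $d(f_n(x), f_m(x)) \le \varepsilon$.
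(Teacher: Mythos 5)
Your proposal is correct and follows essentially the same route as the paper's proof: verify the metric axioms, pass to the pointwise limit of a Cauchy sequence using completeness of $(E,d)$, upgrade to uniform convergence by letting $m \to \infty$ in the Cauchy estimate, and confirm membership in $\mathcal{E}$ by a triangle inequality anchored at a late term of the sequence. The only cosmetic differences are that you phrase the uniform convergence step in $\varepsilon$--$N$ language where the paper uses an $\inf/\sup$ chain, and you explicitly check finiteness of $\delta$, a point the paper leaves implicit.
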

\begin{proof}[Proof of Lemma~\ref{lem:complete}]
First, note that for all $f, g, h \in \mathcal{E}$ it holds that $((\delta(f,g)=0)  \Leftrightarrow (f=g))$, $\delta(f,g) = \delta(g,f)$, and
\begin{align}
\begin{split}
\delta(f,g)+ \delta(g,h) &= \sup_{x \in X} d(f(x),g(x)) + \sup_{x \in X} d(g(x),h(x)) \\
& \geq \sup_{x \in X} \big[ d(f(x),g(x)) + d(g(x),h(x)) \big] \\
& \geq \sup_{x \in X} d(f(x),h(x)) = \delta(f,h).
\end{split}
\end{align}
This proves that the pair $(\mathcal{E}, \delta)$ is a metric space. It thus remains to prove that $(\mathcal{E},\delta)$ is complete. For this let $(f_n)_{n \in \N = \{1, 2, 3, \ldots\}} \subseteq \mathcal{E}$ be a Cauchy sequence in $(\mathcal{E}, \delta)$. 
This assures that  for all $ x \in X $ it holds that  $(f_n(x))_{n \in \N} \subseteq E $ 
is a Cauchy sequence in $(E,d)$. The assumption that $(E, d)$ is a complete metric space hence ensures that there exists a function $g \colon X \to E$ such that for all $x \in X$ it holds that 
\begin{align}
\label{eq:complete:g}
\limsup_{n \to \infty} d(f_n(x),g(x))=0.
\end{align}
This and the assumption that $(f_n)_{n \in \N} \subseteq \mathcal{E}$ is a Cauchy sequence establish that 
\begin{align}
\label{eq:complete}
\begin{split}
0 &= \inf_{N \in \N }  \sup_{n,m \in \N \cap [N, \infty)} \sup_{x \in X} d(f_n(x), f_m(x)) 
\\ &
= \inf_{N \in \N} \sup_{n \in \N \cap [N, \infty)} \sup_{x \in X} \sup_{m \in \N \cap[N, \infty)} d(f_n(x), f_m(x))
\\
& \geq \inf_{N \in \N} \sup_{n \in \N \cap [N, \infty)} \sup_{x \in X} \lim_{m \to \infty} d(f_n(x),f_m(x)) 
\\&
= \inf_{N \in \N} \sup_{n \in \N \cap [N, \infty)} \sup_{x \in X}  d(f_n(x),g(x))
\\
& = \limsup_{n \to \infty} \sup_{x \in X}  d(f_n(x),g(x)) .
\end{split}
\end{align}
This implies that there exists a natural number $k \in \N$ such that 
\begin{align}
\sup_{x \in X}  d(f_k(x),g(x)) \leq 1.
\end{align}
The fact that $f_k \in \mathcal{E}$ therefore proves that
\begin{align}
\begin{split}
\inf_{e \in E} \sup_{x \in X} d(g(x), e) &\leq \inf_{e \in E} \sup_{x \in X} \big[ d(g(x), f_k(x)) + d(f_k(x),e) \big] \\
&\leq   \sup_{x \in X}  d(g(x), f_k(x)) + \inf_{e \in E} \sup_{x \in X}  d(f_k(x),e) \\
& \leq 1+ \inf_{e \in E} \sup_{x \in X}  d(f_k(x),e) < \infty.
\end{split}
\end{align}
This ensures that $g \in \mathcal{E}$. Next note that \eqref{eq:complete} assures that $\limsup_{n \to \infty} \delta(f_n,g)=0$. 
The proof of Lemma~\ref{lem:complete} is thus completed.
\end{proof}

\begin{lemma}
\label{lem:closed}
Let $(X, \mathcal{X})$ be a topological space, let $(E, d)$ be a metric space, let $g \colon X \to E$ be a function, 
and let $f_n \in C(X,E)$, $n \in \N$, satisfy that $\limsup_{n\to \infty}  \sup( \{ d(f_n(x), g(x))  \colon x \in X\} \cup \{0\})=0$. 
Then $g\in C(X,E)$.
\end{lemma}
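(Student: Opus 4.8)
The plan is to establish continuity of $g$ pointwise: I would fix an arbitrary $x_0 \in X$ and show that $g$ is continuous at $x_0$, which, since $x_0$ is arbitrary, yields $g \in C(X,E)$. Because $(X,\mathcal{X})$ is merely a topological space (with no countable-basis or metrizability assumption), I cannot argue via sequences and must instead work directly with the open-neighborhood characterization of continuity: for every $\varepsilon \in (0,\infty)$ I need to produce an open set $U \in \mathcal{X}$ with $x_0 \in U$ such that $d(g(x),g(x_0)) < \varepsilon$ for all $x \in U$.

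The core of the argument is the classical three-term estimate. Given $\varepsilon \in (0,\infty)$, the uniform-convergence hypothesis $\limsup_{n\to\infty} \sup(\{d(f_n(x),g(x)) \colon x \in X\} \cup \{0\}) = 0$ allows me to select $n \in \N$ with $\sup_{x \in X} d(f_n(x),g(x)) \le \varepsilon/3$ (the adjunction of $\{0\}$ merely guarantees that the supremum is taken over a non-empty set and is thus a well-defined element of $[0,\infty]$). With this $n$ now fixed, the continuity of $f_n$ at $x_0$ furnishes an open set $U \in \mathcal{X}$ with $x_0 \in U$ and $d(f_n(x),f_n(x_0)) < \varepsilon/3$ for all $x \in U$. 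The triangle inequality then gives, for every $x \in U$,
\begin{equation}
d(g(x),g(x_0)) \le d(g(x),f_n(x)) + d(f_n(x),f_n(x_0)) + d(f_n(x_0),g(x_0)) < \tfrac{\varepsilon}{3} + \tfrac{\varepsilon}{3} + \tfrac{\varepsilon}{3} = \varepsilon,
\end{equation}
where the strict inequality is inherited from the middle term, and this is exactly the desired neighborhood estimate.

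I do not anticipate a genuine obstacle here; the only point demanding care is to phrase continuity in the general topological setting through open neighborhoods rather than through sequential limits, so that no metrizability of $X$ is implicitly invoked. Everything else is the standard uniform-limit theorem, and once the single index $n$ has been chosen uniformly in $x$, the estimate above reduces to a routine application of the triangle inequality.
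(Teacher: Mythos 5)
Your proof is correct and follows essentially the same route as the paper's: choose a single index $n$ (uniformly in $x$) from the uniform-convergence hypothesis, invoke continuity of $f_n$ to get an open neighborhood, and conclude via the three-term triangle inequality $d(g(x),g(x_0)) \le d(g(x),f_n(x)) + d(f_n(x),f_n(x_0)) + d(f_n(x_0),g(x_0))$. The only cosmetic difference is that the paper handles the case $X = \emptyset$ by an explicit w.l.o.g.\ reduction, whereas you absorb it through the $\cup\,\{0\}$ convention in the supremum.
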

\begin{proof}[Proof of Lemma~\ref{lem:closed}]
Throughout this proof assume w.l.o.g.\ that $X \neq \emptyset$. 
Observe that the assumption that $\limsup_{n\to \infty} \sup_{x \in X} d(f_n(x), g(x))=0$
ensures that for every $\varepsilon \in (0, \infty)$ there exists
a natural number $N_{\varepsilon} \in \N$ such that for all $x \in X$, $n \in (\N \cap [N_{\varepsilon}, \infty))$ it holds that
\begin{align}
\label{eq:closed}
d(f_n(x),g(x)) < \tfrac{\varepsilon}{3}.
\end{align}
Moreover, note that the assumption that $ \forall\, n \in \N \colon f_n \in C(X,E)$ implies that for all $x \in X$, $\varepsilon \in (0, \infty)$ there exists a set $A_{x, \varepsilon} \in \mathcal{X}$ with $x \in A_{x, \varepsilon}$ such that for all $y \in A_{x, \varepsilon}$ it holds that
\begin{align}
d(f_{N_{\varepsilon}}(x), f_{N_{\varepsilon}}(y)) < \tfrac{\varepsilon}{3}.
\end{align}
This  and \eqref{eq:closed} prove that
for all $x \in X$, $\varepsilon \in (0, \infty)$, $y \in A_{x, \varepsilon}$  it holds that
\begin{align}
\begin{split}
d(g(x),g(y)) &\leq d(g(x), f_{N_{\varepsilon}}(x)) + d(f_{N_{\varepsilon}}(x), f_{N_{\varepsilon}}(y)) + d(f_{N_\varepsilon}(y),g(y))\\ &<\tfrac{\varepsilon}{3}+ \tfrac{\varepsilon}{3} + \tfrac{\varepsilon}{3} = \varepsilon.
\end{split}
\end{align}
Hence, we obtain that $g \in C(X,E)$. The proof of Lemma~\ref{lem:closed} is thus completed.
\end{proof}

The next result, Corollary~\ref{cor:complete}, follows directly from Lemma~\ref{lem:complete} and Lemma~\ref{lem:closed}. 
\begin{cor}
\label{cor:complete}
Let $(X, \mathcal{X})$ be a topological space, 
let $(E, d)$ be a complete metric space, 
let $\mathcal{E}$ be the set given by  
\begin{equation}
  \mathcal{E}= 
  \left\{
    f \in C(X, E) \colon \forall \, e \in E \colon \sup\nolimits	_{x \in X} d(f(x),e)<\infty
  \right\}\!,
\end{equation}
and let $\delta \colon \mathcal{E} \times \mathcal{E} \to [0,\infty)$ be the function which satisfies for all $f, g \in \mathcal{E}$ that $\delta(f,g) = \sup(\{d(f(x),g(x)) \colon x \in X\} \cup \{0\})$. Then it holds that the pair $(\mathcal{E}, \delta)$ is a complete metric space.
\end{cor}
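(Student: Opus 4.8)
The plan is to deduce Corollary~\ref{cor:complete} directly from the two preceding results by exhibiting the space $\mathcal{E}$ defined here as a suitable subset of the completion-type space from Lemma~\ref{lem:complete} and then showing that this subset is closed. First I would observe that the set $\mathcal{E}$ defined in Corollary~\ref{cor:complete} is contained in the set considered in Lemma~\ref{lem:complete}. Indeed, if $f \in C(X,E)$ satisfies $\sup_{x \in X} d(f(x),e) < \infty$ for all $e \in E$ (equivalently, for some $e \in E$, since any two points of $E$ are at finite distance and the triangle inequality transfers finiteness), then in particular $\inf_{e \in E} \sup_{x \in X} d(f(x),e) < \infty$, so $f$ lies in the larger set $\widetilde{\mathcal{E}}$ of Lemma~\ref{lem:complete}. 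Note that the distance functions agree, since $\sup_{x \in X} d(f(x),g(x))$ and $\sup(\{d(f(x),g(x)) \colon x \in X\} \cup \{0\})$ coincide (the latter spelling just accounts for the edge case $X = \emptyset$, where both equal $0$). Thus $(\mathcal{E}, \delta)$ is a metric subspace of the complete metric space $(\widetilde{\mathcal{E}}, \delta)$.

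Next I would invoke the standard fact that a metric subspace of a complete metric space is itself complete if and only if it is closed. Hence it suffices to show that $\mathcal{E}$ is a closed subset of $\widetilde{\mathcal{E}}$ with respect to $\delta$. To this end, let $(f_n)_{n \in \N} \subseteq \mathcal{E}$ be a sequence converging in $(\widetilde{\mathcal{E}}, \delta)$ to some limit $g \in \widetilde{\mathcal{E}}$, so that $\limsup_{n \to \infty} \sup_{x \in X} d(f_n(x), g(x)) = 0$. I need to verify that $g \in \mathcal{E}$, i.e. that $g \in C(X,E)$ and that $\sup_{x \in X} d(g(x), e) < \infty$ for all $e \in E$. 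The continuity of $g$ follows immediately from Lemma~\ref{lem:closed}, since the $f_n$ are continuous and converge uniformly to $g$ in exactly the sense required by that lemma. The boundedness condition follows from the fact that $g \in \widetilde{\mathcal{E}}$ together with the same triangle-inequality argument as above: since $\inf_{e \in E} \sup_{x \in X} d(g(x),e) < \infty$, there is some $e_0 \in E$ with $\sup_{x \in X} d(g(x),e_0) < \infty$, and then for arbitrary $e \in E$ one has $\sup_{x \in X} d(g(x),e) \leq \sup_{x \in X} d(g(x),e_0) + d(e_0,e) < \infty$. Therefore $g \in \mathcal{E}$, which shows that $\mathcal{E}$ is closed.

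Combining these observations, $(\mathcal{E}, \delta)$ is a closed subspace of the complete metric space $(\widetilde{\mathcal{E}}, \delta)$, hence complete, which is the assertion of Corollary~\ref{cor:complete}. I do not anticipate a genuine obstacle here, as the result is essentially bookkeeping; the only point requiring mild care is reconciling the two slightly different definitions of the bounding condition (the $\inf_e \sup_x$ form in Lemma~\ref{lem:complete} versus the $\forall\, e \colon \sup_x$ form in the Corollary) and the two spellings of the supremum distance, both of which are handled by the triangle inequality and the empty-set convention respectively. If one prefers a self-contained argument avoiding the closed-subspace lemma, one can instead take a Cauchy sequence $(f_n)$ in $(\mathcal{E}, \delta)$ directly, apply Lemma~\ref{lem:complete} to obtain a limit $g \in \widetilde{\mathcal{E}}$ with $\limsup_{n\to\infty}\delta(f_n,g)=0$, and then use Lemma~\ref{lem:closed} and the triangle-inequality boundedness argument to conclude $g \in \mathcal{E}$; this is the route I would actually write out.
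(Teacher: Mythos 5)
Your proposal is correct and takes the same route the paper intends: the paper gives no separate proof of Corollary~\ref{cor:complete}, stating only that it ``follows directly from Lemma~\ref{lem:complete} and Lemma~\ref{lem:closed}'', and your argument---completeness of the larger space $\widetilde{\mathcal{E}}$ from Lemma~\ref{lem:complete}, continuity of the uniform limit from Lemma~\ref{lem:closed}, and a triangle-inequality step to reconcile the $\inf_{e}\sup_{x}$ and $\forall\, e\colon \sup_{x}$ boundedness conditions---is precisely that derivation written out. Either of your two variants (closed subspace of a complete space, or running the Cauchy-sequence argument directly) is a faithful elaboration of the paper's one-line justification.
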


The next result, Corollary~\ref{cor:complete:R}, is an immediate consequence of Corollary~\ref{cor:complete}.

\begin{cor}
\label{cor:complete:R}
Let $(X, \mathcal{X})$ be a topological space, let $(E, d)$ be a complete metric space, let $e \in E$, $R \in [0, \infty)$, 
let $\mathcal{E}$ be the set given by  
\begin{equation}
  \mathcal{E}= 
  \left\{f \in C(X, E) \colon  \sup\nolimits_{x \in X} d(f(x),e) \leq R
  \right\}\!,
\end{equation}
and let $\delta \colon \mathcal{E} \times \mathcal{E} \to [0,\infty)$ be the function which satisfies for all $f, g \in \mathcal{E}$ that $\delta(f,g) = \sup(\{d(f(x),g(x)) \colon x \in X\} \cup \{0\}) $.
Then it holds that the pair $ ( \mathcal{E}, \delta ) $ 
is a complete metric space.
\end{cor}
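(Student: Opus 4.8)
The plan is to realize $\mathcal{E}$ as a closed subset of the complete metric space supplied by Corollary~\ref{cor:complete} and then to invoke the elementary fact that a closed subset of a complete metric space is again complete. Throughout I would assume without loss of generality that $X \neq \emptyset$, since otherwise $\mathcal{E}$ consists of at most the empty function and $(\mathcal{E},\delta)$ is trivially complete. Let $\mathcal{F} = \{ f \in C(X,E) \colon \forall\, w \in E \colon \sup_{x \in X} d(f(x),w) < \infty \}$ be the set from Corollary~\ref{cor:complete}, equipped with the same metric $\delta$. By Corollary~\ref{cor:complete} the pair $(\mathcal{F},\delta)$ is a complete metric space, and on $\mathcal{E}$ the metric $\delta$ agrees with its restriction from $\mathcal{F}$, so it remains only to verify that $\mathcal{E} \subseteq \mathcal{F}$ and that $\mathcal{E}$ is $\delta$-closed in $\mathcal{F}$.

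For the inclusion $\mathcal{E} \subseteq \mathcal{F}$ I would fix $f \in \mathcal{E}$ and an arbitrary $w \in E$ and use the triangle inequality together with $R < \infty$ to estimate, for every $x \in X$, that $d(f(x),w) \leq d(f(x),e) + d(e,w) \leq R + d(e,w)$, hence $\sup_{x \in X} d(f(x),w) \leq R + d(e,w) < \infty$. Since $f \in C(X,E)$ and $w \in E$ was arbitrary, this yields $f \in \mathcal{F}$.

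For the closedness I would consider a sequence $(f_n)_{n \in \N} \subseteq \mathcal{E}$ and a function $g \in \mathcal{F}$ with $\limsup_{n \to \infty} \delta(f_n,g) = 0$ and show $g \in \mathcal{E}$. Indeed, for every $x \in X$ and every $n \in \N$ one has $d(g(x),e) \leq d(g(x),f_n(x)) + d(f_n(x),e) \leq \delta(f_n,g) + R$, where $d(f_n(x),e) \leq R$ because $f_n \in \mathcal{E}$; letting $n \to \infty$ gives $d(g(x),e) \leq R$, and taking the supremum over $x \in X$ gives $\sup_{x \in X} d(g(x),e) \leq R$, that is, $g \in \mathcal{E}$.

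Combining these two facts, any $\delta$-Cauchy sequence in $\mathcal{E}$ is $\delta$-Cauchy in $\mathcal{F}$, hence $\delta$-converges to some $g \in \mathcal{F}$, and the closedness just shown forces $g \in \mathcal{E}$; thus the sequence converges within $\mathcal{E}$ and $(\mathcal{E},\delta)$ is complete. I expect no genuine obstacle here: the argument is entirely routine, with the only mild care needed in the triangle-inequality bound of the inclusion step (which is precisely where the finiteness of $R$ enters) and in checking that the uniform bound $R$ survives passage to the $\delta$-limit, so that the closed-subspace conclusion indeed applies.
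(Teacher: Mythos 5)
Your proposal is correct and follows exactly the route the paper intends: the paper states that Corollary~\ref{cor:complete:R} is an immediate consequence of Corollary~\ref{cor:complete}, and your argument simply fills in the details of that deduction by exhibiting $\mathcal{E}$ as a $\delta$-closed subset of the complete space from Corollary~\ref{cor:complete} and invoking completeness of closed subsets. Both the inclusion step and the closedness step are carried out correctly, so there is nothing to add.
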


\section{Measurability properties}
\label{sec:measurability}

In this section we recall in Lemma~\ref{lem:reg:w} and Lemma~\ref{lem:regularity} some well-known facts on the Bochner integrability of certain functions involving suitable semigroups.

\begin{lemma}
\label{lem:reg:w}
Let $(V, \left\|\cdot\right\|_V)$ 
be a separable $\R$-Banach space,
let $(W,\left\|\cdot\right\|_W)$ 
be an $\R$-Banach space, let $T \in (0, \infty)$, $w \in W$, and let
$S \colon (0,T) \to L(W,V)$ be a $\mathcal{B}((0,T)) \slash \mathcal{B}(L(W,V))$-measurable function. 
Then it holds that the function 
\begin{equation}
  (0,T) \ni s \mapsto S_{T-s} \, w \in V
\end{equation}
is strongly $\mathcal{B}((0,T)) \slash (V,\left\|\cdot \right\|_V)$-measurable.
\end{lemma}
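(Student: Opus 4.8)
The plan is to establish the claim in two stages: first I would show that the map $(0,T) \ni s \mapsto S_{T-s}\,w \in V$ is $\mathcal{B}((0,T))/\mathcal{B}(V)$-measurable, that is, Borel measurable into $V$ equipped with the norm topology, and then I would upgrade this Borel measurability to strong measurability by exploiting the separability of $V$.

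For the first stage, I would write the map in question as a composition of three Borel measurable maps. First, the reflection $(0,T) \ni s \mapsto T-s \in (0,T)$ is continuous and hence $\mathcal{B}((0,T))/\mathcal{B}((0,T))$-measurable. Second, the hypothesis asserts that $S \colon (0,T) \to L(W,V)$ is $\mathcal{B}((0,T))/\mathcal{B}(L(W,V))$-measurable. Third, the evaluation map $L(W,V) \ni A \mapsto A\,w \in V$ is Lipschitz continuous with Lipschitz constant $\|w\|_W$, since for all $A, B \in L(W,V)$ it holds that $\|A\,w - B\,w\|_V = \|(A-B)\,w\|_V \le \|A-B\|_{L(W,V)}\,\|w\|_W$; in particular this evaluation map is $\mathcal{B}(L(W,V))/\mathcal{B}(V)$-measurable. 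Composing these three maps then shows that $(0,T) \ni s \mapsto S_{T-s}\,w \in V$ is $\mathcal{B}((0,T))/\mathcal{B}(V)$-measurable.

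For the second stage I would invoke the separability of $V$. The idea is to fix a countable dense subset $\{v_k \colon k \in \N\} \subseteq V$ and, for each $n \in \N$, to disjointify the covering balls $\{B(v_k, \tfrac{1}{n}) \colon k \in \N\}$ into a countable Borel partition of $V$. Pulling this partition back through the Borel measurable map $f \colon (0,T) \to V$ given by $f(s) = S_{T-s}\,w$ yields a countable Borel partition of $(0,T)$ on each piece of which $f$ lies within distance $\tfrac{1}{n}$ of some $v_k$. The associated countably-valued measurable functions then converge uniformly, hence pointwise, to $f$, and since each of them is in turn a pointwise limit of simple functions (by truncating to finitely many of the values), $f$ is a pointwise limit of simple functions and thus strongly measurable. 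This is precisely the Pettis measurability theorem in the special case of separable range.

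The composition step is entirely routine, resting only on the continuity estimate for the evaluation map. The genuine content lies in the passage from Borel measurability to strong measurability, which is exactly where the separability assumption on $V$ is used; this is the step I would expect to be the crux, and the place where, in a self-contained treatment, one must either cite the Pettis measurability theorem or carry out the explicit simple-function approximation sketched above.
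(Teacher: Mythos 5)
Your proposal is correct and follows essentially the same route as the paper: both factor the map as the composition of the reflection $s \mapsto T-s$, the measurable function $S$, and the evaluation operator $A \mapsto Aw$ (whose continuity you establish via the same estimate $\|Aw\|_V \le \|A\|_{L(W,V)}\|w\|_W$ that the paper uses to show the evaluation lies in $L(L(W,V),V)$), and both then pass from Borel to strong measurability via separability of $V$. The only difference is cosmetic: the paper simply invokes this last standard fact, whereas you additionally sketch the Pettis-type simple-function approximation behind it.
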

\begin{proof}[Proof of Lemma~\ref{lem:reg:w}]
Throughout this proof let 
$ \epsilon \colon L(W,V) \to V $ 
and 
$ g \colon (0,T)$ $\to V $ 
be the functions 
which satisfy for all $A \in L(W,V)$ that 
\begin{equation}
  \epsilon(A)=Aw 
  \qquad 
  \text{and}
  \qquad
  g= \epsilon \circ S
  . 
\end{equation}
Note that for all $a,b \in \R$, $A,B \in L(W,V)$ it holds that
\begin{align}
\epsilon (aA+bB)=a\, Aw +b \, Bw = a \,  \epsilon (A) + b  \, \epsilon (B)
\end{align}
and 
\begin{align}
\|\epsilon (A) \|_V= \|Aw\|_V \leq \|A\|_{L(W,V)} \|w\|_W.
\end{align}
Therefore, we obtain that 
\begin{equation}
  \epsilon \in L(L(W,V),V).
\end{equation}
This, in particular, proves that $\epsilon$ is $\mathcal{B}(L(W,V)) \slash \mathcal{B}(V)$-measurable.
This and the assumption $S$ is $\mathcal{B}((0,T)) \slash \mathcal{B}(L(W,V))$-measurable 
establish that  $ g= \epsilon \circ S$ is $\mathcal{B}((0,T)) \slash \mathcal{B}(V)$- measurable.
The fact that $((0,T) \ni s \mapsto (T-s) \in (0,T))$ is a $\mathcal{B}((0,T)) \slash \mathcal{B}((0,T))$ measurable function hence ensures that 
\begin{align}
((0,T) \ni s \mapsto S_{T-s} \, w \in V)
\end{align}
is a $\mathcal{B}((0,T)) \slash \mathcal{B}(V)$-measurable function.
This together with the assumption that  $(V, \left\|\cdot\right\|_V)$ is separable demonstrates that 
 the function 
\begin{equation}
(0,T) \ni s \mapsto S_{T-s} \, w \in V
\end{equation}
is strongly $\mathcal{B}((0,T)) \slash (V,\left\|\cdot\right\|_V)$-measurable.
The proof of Lemma~\ref{lem:reg:w} is thus completed.
\end{proof}

\begin{lemma}
\label{lem:regularity}
Let $(V, \left\|\cdot\right\|_V)$ 
be a separable $\R$-Banach space,
let $(W,\left\|\cdot\right\|_W)$ 
be an $\R$-Banach space, 
let $T \in (0, \infty)$,   $y \in C([0,T],W)$, 
and let $S \colon (0,T) \to L(W,V)$ be a $\mathcal{B}((0,T)) \slash \mathcal{B}(L(W,V))$-measurable function which satisfies that
\begin{equation}\label{eq:regularity:hp}\inf_{\alpha \in (0,1)}   \sup_{t \in (0,T)} t^{\alpha} \|S_t\|_{L(W,V)} < \infty.\end{equation}
Then   
\begin{enumerate}[(i)]
	\item \label{item:reg:y} it holds that the function $(0,T) \ni s \mapsto S_{T-s} \, y_s \in V$ is strongly $\mathcal{B}((0,T)) \slash (V,\left\|\cdot\right\|_V)$-measurable  and 
	\item \label{item:y:int} it holds that  $ \int_0^T \|S_{T-s} \, y_s \|_V \, ds < \infty$.
\end{enumerate}    
\end{lemma}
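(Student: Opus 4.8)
The plan is to reduce both assertions to an application of Lemma~\ref{lem:reg:w} together with the integrability hypothesis \eqref{eq:regularity:hp}. For part~\eqref{item:reg:y}, the difficulty is that Lemma~\ref{lem:reg:w} treats only the case of a \emph{fixed} vector $w \in W$, whereas here the vector $y_s$ varies with $s$. The natural strategy is to approximate the continuous path $s \mapsto y_s$ by simple (piecewise constant) functions. First I would fix $n \in \N$ and define a step-function approximation $y^{(n)} \colon [0,T] \to W$, for instance $y^{(n)}_s = y_{\fl{s}}$ where $\fl{s}$ is the largest point of the grid $\{0, T/n, 2T/n, \ldots, T\}$ not exceeding $s$. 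Since $y \in C([0,T],W)$ is uniformly continuous on the compact interval $[0,T]$, one gets $\sup_{s \in [0,T]} \|y_s - y^{(n)}_s\|_W \to 0$ as $n \to \infty$. For each fixed $n$, the map $(0,T) \ni s \mapsto S_{T-s}\, y^{(n)}_s \in V$ is a finite sum of functions of the form $\one_{[a,b)}(s)\, S_{T-s}\, w$ with $w \in W$ fixed, each of which is strongly measurable by Lemma~\ref{lem:reg:w}; hence $s \mapsto S_{T-s}\, y^{(n)}_s$ is strongly measurable.

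Next I would pass to the limit in $n$. Using \eqref{eq:regularity:hp}, pick $\alpha \in (0,1)$ and a constant $C \in [0,\infty)$ with $\sup_{t \in (0,T)} t^{\alpha}\|S_t\|_{L(W,V)} \le C$, so that $\|S_{T-s}\|_{L(W,V)} \le C (T-s)^{-\alpha}$ for $s \in (0,T)$. Then
\begin{align}
\| S_{T-s}\, y_s - S_{T-s}\, y^{(n)}_s \|_V
\le C\,(T-s)^{-\alpha}\, \|y_s - y^{(n)}_s\|_W
\le C\,(T-s)^{-\alpha}\, \sup_{u \in [0,T]}\|y_u - y^{(n)}_u\|_W,
\end{align}
which tends to $0$ for each fixed $s \in (0,T)$ as $n \to \infty$. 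Thus $s \mapsto S_{T-s}\, y_s$ is the pointwise limit of the strongly measurable functions $s \mapsto S_{T-s}\, y^{(n)}_s$, and a pointwise limit of strongly measurable functions into a (separable) Banach space is again strongly measurable. This proves~\eqref{item:reg:y}.

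For part~\eqref{item:y:int}, I would use the same bound on the semigroup. Since $y$ is continuous on the compact interval $[0,T]$, the quantity $M = \sup_{u \in [0,T]} \|y_u\|_W$ is finite, and therefore
\begin{align}
\int_0^T \|S_{T-s}\, y_s\|_V \, ds
\le \int_0^T \|S_{T-s}\|_{L(W,V)}\, \|y_s\|_W \, ds
\le C\, M \int_0^T (T-s)^{-\alpha} \, ds
= \frac{C\, M\, T^{1-\alpha}}{1-\alpha} < \infty,
\end{align}
where the last integral is finite precisely because $\alpha \in (0,1)$. The function $s \mapsto \|S_{T-s}\, y_s\|_V$ is measurable by~\eqref{item:reg:y}, so the integral is well defined, and the displayed estimate gives the claimed finiteness.

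I expect the main obstacle to be the careful handling of the strong measurability in part~\eqref{item:reg:y}: one must be explicit about approximating the $W$-valued path $y$ by simple functions, invoke Lemma~\ref{lem:reg:w} on each piece, and then justify the pointwise limit. The integrability claim in~\eqref{item:y:int} is then essentially a routine estimate, the only subtlety being that the singularity $(T-s)^{-\alpha}$ at the endpoint $s = T$ is integrable exactly because $\alpha < 1$.
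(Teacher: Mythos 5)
Your proposal is correct and follows essentially the same route as the paper's own proof: a piecewise-constant (grid) approximation of $y$, strong measurability of each approximant via Lemma~\ref{lem:reg:w} applied to finitely many fixed vectors $y_{kT/n}$, passage to the pointwise limit to obtain item~(\ref{item:reg:y}), and the bound $\|S_{T-s}\|_{L(W,V)} \leq C (T-s)^{-\alpha}$ with $\alpha \in (0,1)$ to obtain item~(\ref{item:y:int}). The only cosmetic difference is that the paper's pointwise convergence argument uses continuity of $y$ at each fixed $s$ together with finiteness of $\|S_{T-s}\|_{L(W,V)}$, rather than your uniform-continuity estimate, but this changes nothing of substance.
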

\begin{proof}[Proof of Lemma~\ref{lem:regularity}]
Throughout this proof
let $\alpha\in(0,1)$ be a real number which satisfies that
\begin{align}
\label{well:alpha}
\sup_{t\in(0,T)} t^\alpha \|S_t\|_{L(W,V)}< \infty,
\end{align}
let $x \colon (0,T) \to V$ be the function which satisfies for all $s \in (0,T)$ that $x(s) = S_{T-s} \, y_s$,
let $ \lf \cdot \rf_h \colon \R \to \R$, $ h \in (0, \infty)$, be the functions which satisfy for all $h \in (0, \infty)$, $t \in \R$ that 
\begin{align}
\lf s \rf_h = \max( (-\infty, s] \cap \{0, h, -h, 2h, -2h, \ldots\} ),
\end{align}
let $x_n \colon (0,T) \to V $, $n \in \N$, be the functions which satisfy for all  $s \in (0,T)$, $n \in \N$ that
\begin{align}
x_n(s) = S_{T-s} \, y_{\lf s \rf_{\frac{T}{n}}},
\end{align}
and for every set   $A \subseteq \R $  let  $\mathbbm{1}_A \colon \R \to \{0,1\}$ be the function which satisfies for all $ a \in A$ that $\mathbbm{1}_A(a)=1$ and for all $b\in \R \backslash A$ that $\mathbbm{1}_A(b)=0$.
Observe that the assumption 
that $y \in C([0,T],W)$ proves that for all $s \in (0,T)$ it holds that
\begin{align}
\label{eq:point:limit}
  \limsup_{ n \to \infty } 
  \|x_n(s) - x(s)\|_V 
  \leq 
  \|S_{T-s}\|_{L(W,V)} 
  \left[
    \limsup_{ n \to \infty } 
    \|y_{\lf s \rf_{\frac{T}{n}}} - y_s\|_V 
  \right] 
  = 0.
\end{align}
Moreover, note that for all $h \in (0, \infty)$, $s \in (0,T)$ it holds that
\begin{align}
S_{T-s}\, y_{\lfloor s \rfloor_{h}}= \sum_{k=0}^{\lfloor \frac{T}{h} \rfloor_1} \mathbbm{1}_{ [k h, (k+1)h)}(s) \, S_{T-s} \, y_{k h}.
\end{align}
Lemma~\ref{lem:reg:w} and the fact that for every strongly $\mathcal{B}((0,T)) \slash (V,\left\|\cdot\right\|_V)$-measurable function $f \colon (0, T) \to V$ and every strongly $\mathcal{B}((0,T)) \slash (V,\left\|\cdot\right\|_V)$-measurable function $g \colon (0, T) \to V$ it holds that the function $\big( (0,T) \ni s \mapsto (f(s) + g(s)) \in V \big)$ is  strongly $\mathcal{B}((0,T)) \slash (V,\left\|\cdot\right\|_V)$-measurable 
therefore prove that the functions $x_n$, $n \in \N$, are strongly $\mathcal{B}((0,T)) \slash (V,\left\|\cdot\right\|_V)$-measurable. Combining this  with \eqref{eq:point:limit} and
the fact that for every sequence of strongly $\mathcal{B}((0,T)) \slash (V,\left\|\cdot\right\|_V)$-measurable functions $f_n \colon (0, T) \to V$, $n \in \N$, and every function $f \colon (0, T) \to V$ with $ \forall \, s \in (0, T) \colon \limsup_{ n \to \infty } \|f_n(s) - f(s) \|_V = 0$ it holds that the function $f$ is strongly $\mathcal{B}((0,T)) \slash (V,\left\|\cdot\right\|_V)$-measurable 
 implies that the function $x$ is also strongly $\mathcal{B}((0,T)) \slash (V,\left\|\cdot\right\|_V)$-measurable. This establishes item~\eqref{item:reg:y}.
Next observe that \eqref{well:alpha} proves that 
\begin{align}
\label{eq:well:def2}
\begin{split}
&\int_0^T \|S_{T-s}\|_{L(W,V)} \, d s \leq \big[\! \sup\nolimits_{s \in (0,T)} s^{\alpha} \|S_s\|_{L(W,V)} \big] \int_0^T (T-s)^{-\alpha} \, d s\\
& = \frac{T^{1-\alpha}}{(1-\alpha)} \big[\! \sup\nolimits_{s \in (0,T)} s^{\alpha} \|S_s\|_{L(W,V)} \big]< \infty.
\end{split}
\end{align}
This together with the assumption that $y\in C([0,T],W)$ ensures that
\begin{align}
\begin{split}
\label{eq:well:def}
\int_0^T \|S_{T-s} \, y_s \|_V \, d s &\leq   \int_0^T \|S_{T-s}\|_{L(W,V)} \|y_s\|_W \, d s \\
&\leq \big[\!\sup\nolimits_{s \in [0,T]} \|y_s\|_W\big] \int_0^T \|S_{T-s}\|_{L(W,V)} \, d s < \infty.
\end{split}
\end{align}
This establishes item~\eqref{item:y:int}. The proof of Lemma~\ref{lem:regularity} is thus completed.
\end{proof}

\section{Continuity properties of mild solutions}
\label{sec:continuity}

In this section we present in Corollary~\ref{cor:mild.continuity}  an elementary result about the continuity  of mild solutions of certain nonlinear evolution equations.

\begin{cor}
\label{cor:mild.continuity}
Let $(V, \left\|\cdot\right\|_V)$ 
be a separable $\R$-Banach space,
let $(W,\left\|\cdot\right\|_W)$ 
be an $\R$-Banach space, 
let $T \in (0, \infty)$, $y \in C([0,T],W)$,
and  let $S \colon (0,T) \to L(W,V)$ be a $\mathcal{B}((0,T)) \slash \mathcal{B}(L(W,V))$-measurable function which satisfies that
\begin{equation}
\inf_{\alpha \in (0,1)}   
\sup_{t \in (0,T)} t^{\alpha} \|S_t\|_{L(W,V)} < \infty.
\end{equation}
Then   
\begin{enumerate}[(i)]
	\item \label{item:cont:y} 
	it holds for all $t\in(0,T]$  that the function $(0,t) \ni s \mapsto S_{t-s} \, y_s \in V$ is strongly $\mathcal{B}((0,t)) \slash (V,\left\|\cdot\right\|_V)$-measurable, 
	\item \label{item:cont:int} 
	it holds for all $t\in(0,T]$  that 
	$ \int_0^t \|S_{t-s} \, y_s \|_V \, ds < \infty$,
	and 
	\item\label{item:cont}
	it holds that the function
	$
	  [0,T] \ni t \mapsto \int^t_0 S_{t-s} \, y_s \, ds \in V
	$
	is continuous.
\end{enumerate} 
\end{cor}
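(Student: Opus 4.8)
The plan is to derive items (i) and (ii) directly from Lemma~\ref{lem:regularity} and to prove item (iii) by a change of variables combined with Lebesgue's dominated convergence theorem for Bochner integrals.

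For items (i) and (ii) I would fix $t \in (0,T]$ and apply Lemma~\ref{lem:regularity} with the endpoint $T$ replaced by $t$, with $y$ replaced by its restriction $y|_{[0,t]} \in C([0,t],W)$, and with $S$ replaced by its restriction $S|_{(0,t)}$. The only hypothesis requiring verification is the singularity bound $\inf_{\alpha \in (0,1)} \sup_{s \in (0,t)} s^{\alpha}\|S_s\|_{L(W,V)} < \infty$, and this is immediate from the assumption of the corollary, since $(0,t) \subseteq (0,T)$ forces $\sup_{s\in(0,t)} s^{\alpha}\|S_s\|_{L(W,V)} \le \sup_{s\in(0,T)} s^{\alpha}\|S_s\|_{L(W,V)}$ for every $\alpha \in (0,1)$. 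Lemma~\ref{lem:regularity} then yields precisely the strong measurability asserted in item (i) and the finiteness asserted in item (ii).

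For item (iii), fix $\alpha\in(0,1)$ with $C := \sup_{s\in(0,T)} s^{\alpha}\|S_s\|_{L(W,V)} < \infty$ and set $\Phi_t := \int_0^t S_{t-s}\,y_s\,ds$ for $t\in(0,T]$ and $\Phi_0 := 0$. The key step is the substitution $r = t-s$, which rewrites the convolution as
\begin{equation}
\Phi_t = \int_0^t S_r\,y_{t-r}\,dr = \int_0^T \mathbbm{1}_{(0,t)}(r)\, S_r\, y_{t-r}\,dr .
\end{equation}
The advantage of this form is that the singular factor $S_r$ now occupies a position independent of $t$, so that all $t$-dependence is carried by the indicator $\mathbbm{1}_{(0,t)}$ and by the argument of the continuous map $y$. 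I would then prove sequential continuity: given $t_0\in[0,T]$ and a sequence $t_n\to t_0$ in $[0,T]$, I would check that the integrands converge for almost every $r\in(0,T)$, namely $\mathbbm{1}_{(0,t_n)}(r)\,S_r\,y_{t_n-r} \to \mathbbm{1}_{(0,t_0)}(r)\,S_r\,y_{t_0-r}$ (using that $\mathbbm{1}_{(0,t_n)}(r)\to\mathbbm{1}_{(0,t_0)}(r)$ for $r\neq t_0$ and that $y_{t_n-r}\to y_{t_0-r}$ for $r<t_0$ by continuity of $y$). All these integrands are bounded by the single integrable function $(0,T)\ni r \mapsto C\,r^{-\alpha}\,[\sup_{s\in[0,T]}\|y_s\|_W]$, whose integrability follows from $\alpha<1$ exactly as in \eqref{eq:well:def2}. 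Dominated convergence for Bochner integrals then gives $\Phi_{t_n}\to\Phi_{t_0}$, which establishes item (iii) (the case $t_0=0$ being automatically included, since then $\mathbbm{1}_{(0,t_n)}(r)\to 0$ almost everywhere).

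The main obstacle, and the point worth emphasizing, is that item (iii) must be obtained \emph{without} any continuity or H\"older hypothesis on the semigroup $S$ itself; only the integrable-singularity bound is available. The substitution $r=t-s$ is precisely what overcomes this: it decouples the nonsmooth kernel from the time variable and replaces the dependence on $t$ by the benign indicator and the continuous map $y$, thereby shifting the entire burden onto dominated convergence. The remaining technical care concerns the strong measurability of the substituted integrands (which is inherited from item (i) through the affine measurable substitution $r=t-s$), the null-set at the jump point $r=t_0$, and the verification that $t_n-r$ remains in $[0,T]$ for the relevant $r$ (which holds since $r\in(0,t_n)$ implies $t_n-r\in(0,t_n)\subseteq[0,T]$); each of these is routine.
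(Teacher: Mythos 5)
Your proposal is correct and takes essentially the same approach as the paper: items (i) and (ii) are deduced from Lemma~\ref{lem:regularity}, and item (iii) rests on the same change of variables $s \mapsto t-s$ that decouples the singular kernel from the time variable, followed by Lebesgue's dominated convergence theorem with the dominating function $r \mapsto r^{-\alpha}\,\big[\sup_{u\in(0,T)}u^{\alpha}\|S_u\|_{L(W,V)}\big]\big[\sup_{u\in[0,T]}\|y_u\|_W\big]$. The only cosmetic difference is that you apply dominated convergence directly to the vector-valued integrands (with an indicator carrying the $t$-dependence), whereas the paper first splits $\|x_{t_1}-x_{t_2}\|_V$ into two scalar integral bounds (handling the domain mismatch via $\max\{t_i-s,0\}$) and applies dominated convergence to those; both formulations are equally valid.
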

\begin{proof}[Proof of Corollary~\ref{cor:mild.continuity}]
Throughout this proof  for every  set $A \subseteq \R$ let $\mathbbm{1}_A \colon \R$ $\to \{0,1\}$ be the function which satisfies for all $ a \in A$ that $\mathbbm{1}_A(a)=1$ and for all $b\in \R \backslash A$ that $\mathbbm{1}_A(b)=0$.
Note that items~\eqref{item:reg:y}--\eqref{item:y:int} in Lemma~\ref{lem:regularity} establish items~\eqref{item:cont:y}--\eqref{item:cont:int}.
It thus remains to prove item~\eqref{item:cont}.
For this let $\alpha\in(0,1)$ be a real number which satisfies that 
\begin{equation}
  \sup_{t \in (0,T)} t^{\alpha} \|S_t\|_{L(W,V)} < \infty
\end{equation}
and let 
$
  x \colon [0,T] \to V 
$
be the function which satisfies for all 
$t\in[0,T]$ that 
\begin{equation}
  x_t = \int^t_0 S_{t-s} \, y_s \, ds
  = \int^t_0 S_s \, y_{t-s} \, ds.
\end{equation}
Observe that for all 
$t_1,t_2\in[0,T]$ with $t_1\leq t_2$ it holds that 
\begin{equation}
\begin{split}
&
  \|x_{t_1}-x_{t_2}\|_V
\\&=
  \left\|
  \int^{t_2}_0
  S_s \, y_{t_2-s}
  \, ds
  -
  \int^{t_1}_0
  S_s \, y_{t_1-s}
  \, ds
  \right\|_V
\\&\leq
  \left\|
  \int^{t_2}_{t_1}
  S_s \, y_{t_2-s}
  \, ds
  \right\|_V
  +
  \left\|
  \int^{t_1}_0
  S_s \, (y_{t_2-s} - y_{t_1-s})
  \, ds
  \right\|_V
\\&\leq
  \int^{t_2}_{t_1}
  \|S_s\|_{L(W,V)} \, \|y_{t_2-s}\|_W
  \, ds
\\&\quad+
  \int^{t_1}_0
  \|S_s\|_{L(W,V)} \, 
  \|y_{\max\{t_2-s,0\}}-y_{\max\{t_1-s,0\}}\|_W
  \, ds
  .
\end{split}
\end{equation}
This implies that for all 
$t_1,t_2\in[0,T]$ with $t_1\leq t_2$ it holds that 
\begin{align*}
&
\|x_{t_1}-x_{t_2}\|_V
\\&\leq
  \bigg[
  \sup_{s\in(0,T)}
  s^\alpha \|S_s\|_{L(W,V)}
  \bigg] \,
  \bigg[
  \sup_{s\in[0,T]}
  \|y_s\|_W
  \bigg] \,
  \bigg[
  \int^{t_2}_{t_1}
  s^{-\alpha} \, ds
  \bigg]
  \\&\quad+
  \bigg[
  \sup_{s\in(0,T)}
  s^\alpha \|S_s\|_{L(W,V)}
  \bigg] \,
  \bigg[
  \int^T_0 
  s^{-\alpha}
  \|y_{\max\{t_2-s,0\}}-y_{\max\{t_1-s,0\}}\|_W \,
  \mathbbm{1}_{ [0,t_1] }(s)
  \,ds
  \bigg]
  \\&=
  \bigg[
  \sup_{s\in(0,T)}
  s^\alpha \|S_s\|_{L(W,V)}
  \bigg] \,
  \Bigg(
  \bigg[
  \sup_{s\in[0,T]}
  \|y_s\|_W
  \bigg] \,
  \bigg[
  \int^T_0
  s^{-\alpha} \, \mathbbm{1}_{ [ t_1, t_2] }(s) \, ds
  \bigg]
  \\&\quad+
  \int^T_0 
  s^{-\alpha}
  \|y_{\max\{t_2-s,0\}}-y_{\max\{t_1-s,0\}}\|_W \,
  \mathbbm{1}_{ [0,t_1] }(s)
  \,ds
  \Bigg)
  . \numberthis
\end{align*}
Lebesgue's theorem of dominated convergence hence ensures that for all 
$ t \in [0,T] $
and all 
$(t^{(n)}_1,t^{(n)}_2)\in[0,t] \times [t,T]$, $n\in\N$,
with 
$
  \limsup_{n\to\infty}
  |t^{(n)}_1 - t^{(n)}_2 |
  =0
$
it holds that 
\begin{equation}
  \limsup_{n\to\infty}
  \|x_{t^{(n)}_1}-x_{t^{(n)}_2}\|_V
  =0
  .
\end{equation}
This establishes item~\eqref{item:cont}. The proof of Corollary~\ref{cor:mild.continuity} is thus completed.
\end{proof}

\section{A perturbation estimate}
\label{sec:perturbation}

In  this section we establish in Lemma~\ref{lemma:perturbation}  a suitable perturbation estimate for mild solutions of nonlinear evolution equations. Lemma~\ref{lemma:perturbation} is an appropriate extended version of Andersson et al.~\cite[Proposition~2.7]{Andersson2015} and 
Jentzen \& Kurniawan~\cite[Corollary~3.1]{JentzenKurniawan2015}.

\begin{lemma}
\label{lemma:perturbation}
Let
 $ \Gamma \colon (0, \infty) \to (0, \infty)$ be the function which satisfies for all $x \in (0, \infty)$  that $\Gamma(x)= \int_0^{\infty} t^{(x-1)} \, e^{-t} \, dt$, let
 $ \mathfrak{E}_r \colon [0, \infty) \to [0, \infty)$, $r \in (0, \infty),$ be the functions which satisfy for all $ r \in (0, \infty)$, $x \in [0, \infty)$  that 
 $
 \mathfrak{E}_r[x]= \sum_{n=0}^{\infty} \tfrac{(x \, \Gamma(r))^{n}}{\Gamma(nr+1)} 
 $,
let $(V, \left\|\cdot\right\|_V)$ 
be a separable $\R$-Banach space,
let $(W,\left\|\cdot\right\|_W)$ 
be an $\R$-Banach space, 
let $T \in (0, \infty)$, $F \in C(V, W)$,  
$ x^1, x^2 \in C([0,T],V) $, 
$ \alpha \in (0,1) $,
let
$ S \colon (0,T) \to L(W,V) $ be a $\mathcal{B}((0,T)) \slash \mathcal{B}(L(W,V)) $-measurable function, let 
$
  \Psi \colon [0,\infty) \to [0,\infty)
$ 
be the function which
satisfies for all $ r \in [0,\infty) $ 
that 
\begin{equation}
\Psi(r) = 
\sup\!\left( \!
\left\{
\tfrac{ \| F(v) - F(w) \|_W 
}{
	\| v - w \|_V
} 
\colon v, w \in V, v \neq w, 
\|v\|_V + \|w\|_V \leq r\right\} \cup \{0\}  
\right)\!,
\end{equation}
and assume that
$
  \sup_{ t \in (0,T) } t^{ \alpha } \| S_t \|_{ L(W,V) } < \infty 
$. 
Then 
\begin{enumerate}[(i)]

\item 
\label{item:well_i}
it holds for all 
$ k \in \{ 1, 2 \} $,
$ t \in (0,T] $ that the 
function 
$
  (0,t) \ni s \mapsto S_{ t - s } \, F( x_s^k ) \in V 
$
is strongly 
$ \mathcal{B}( (0,t) ) \slash (V,\left\|\cdot\right\|_V)$-measurable,

\item 
\label{item:well_ii}
it holds for all 
$ k \in \{ 1, 2 \} $,
$ t \in (0,T] $ that 
$ \int_0^t \| S_{ t - s } \, F( x_s^k ) \|_V \, ds < \infty $, 
and

\item
\label{item:perturbation_iii}
it holds that 
\begin{multline}
\label{eq:perturb}
  \sup_{t \in [0,T]} \|x_t^1-x_t^2\|_V 
\\
 \leq 
  \mathfrak{E}_{ ( 1 - \alpha ) }\!\left[
    T^{ 1 - \alpha }
    \left(
      \sup_{ t \in (0,T) } t^{\alpha} \| S_t \|_{L(W,V)} 
    \right)
    \Psi\!\left(
      \sup_{ t \in [0,T]} 
      \left[ \|x_t^1\|_V + \|x_t^2\|_V \right]
    \right) 
  \right] 
\\
  \cdot \sup_{t \in [0,T]} \left\|x_t^1 -\smallint_0^t S_{t-s} \, F(x_s^1) \, ds  +  \smallint_0^t S_{t-s} \, F(x_s^2) \, ds- x_t^2\right\|_V < \infty.
\end{multline}
\end{enumerate}
\end{lemma}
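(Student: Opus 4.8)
The plan is to obtain items~\eqref{item:well_i}--\eqref{item:well_ii} directly from the results of Section~\ref{sec:continuity} and to establish item~\eqref{item:perturbation_iii} by iterating a weakly singular Gronwall inequality until the Mittag--Leffler function $\mathfrak{E}_{1-\alpha}$ emerges. First I would observe that $F \in C(V,W)$ and $x^k \in C([0,T],V)$ together imply $F \circ x^k \in C([0,T],W)$ for each $k \in \{1,2\}$. Applying Corollary~\ref{cor:mild.continuity} with $y = F \circ x^k$ (whose hypothesis is exactly our assumption $\sup_{t \in (0,T)} t^{\alpha}\|S_t\|_{L(W,V)} < \infty$) then yields items~\eqref{item:well_i} and~\eqref{item:well_ii} verbatim, and in addition shows that the map $[0,T] \ni t \mapsto \int_0^t S_{t-s}\, F(x_s^k)\,ds \in V$ is continuous.

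For item~\eqref{item:perturbation_iii}, I would set $M := \sup_{t \in (0,T)} t^{\alpha}\|S_t\|_{L(W,V)}$, $R := \sup_{t \in [0,T]}[\|x_t^1\|_V + \|x_t^2\|_V]$, and $c := M\,\Psi(R)$, and introduce the defect $d_t := x_t^1 - x_t^2 - \int_0^t S_{t-s}(F(x_s^1) - F(x_s^2))\,ds$, so that by the continuity statement above $d \in C([0,T],V)$ and $D := \sup_{t \in [0,T]}\|d_t\|_V < \infty$. Writing $u_t := \|x_t^1 - x_t^2\|_V$, the estimate $\|S_{t-s}\|_{L(W,V)} \leq M(t-s)^{-\alpha}$, the fact that $\Psi$ is nondecreasing, and the definition of $\Psi$ (which gives $\|F(x_s^1) - F(x_s^2)\|_W \leq \Psi(R)\,u_s$ since $\|x_s^1\|_V + \|x_s^2\|_V \leq R$) would produce, after rearranging $x_t^1 - x_t^2 = d_t + \int_0^t S_{t-s}(F(x_s^1) - F(x_s^2))\,ds$ and taking norms, the weakly singular integral inequality $u_t \leq D + c\int_0^t (t-s)^{-\alpha}\,u_s\,ds$ for all $t \in [0,T]$.

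The heart of the argument is then to iterate this inequality. Let $K$ denote the monotone operator $(Kf)_t = c\int_0^t (t-s)^{-\alpha} f_s\,ds$; applying $K$ repeatedly to $u \leq D + Ku$ and using $KD = D\,(K1)$ gives $u \leq D\sum_{k=0}^{n-1} K^k 1 + K^n u$, where $1$ denotes the constant function. The key computation, which I expect to be the main obstacle, is to evaluate the iterated kernel by means of the Beta-integral identity $\int_0^t (t-s)^{\beta-1} s^{\gamma-1}\,ds = t^{\beta+\gamma-1}\,\Gamma(\beta)\Gamma(\gamma)/\Gamma(\beta+\gamma)$, which by induction on $k$ yields the clean formula $(K^k 1)_t = (c\,\Gamma(1-\alpha))^k\, t^{k(1-\alpha)}/\Gamma(k(1-\alpha)+1)$; here the telescoping of the Gamma factors (using $\Gamma(m(1-\alpha)+1+ (1-\alpha)) = \Gamma((m+1)(1-\alpha)+1)$) is the delicate bookkeeping step.

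Since $u$ is continuous on the compact interval $[0,T]$ and hence bounded, the remainder obeys $(K^n u)_t \leq (\sup_{s \in [0,T]} u_s)\,(K^n 1)_t \to 0$ as $n \to \infty$, because the Mittag--Leffler series converges and its terms vanish. Passing to the limit therefore gives $u_t \leq D\sum_{k=0}^{\infty} (K^k 1)_t = D\,\mathfrak{E}_{1-\alpha}[c\,t^{1-\alpha}]$, where the last equality is precisely the definition of $\mathfrak{E}_{1-\alpha}$ with argument $x = c\,t^{1-\alpha}$. Finally, since $\mathfrak{E}_{1-\alpha}$ has nonnegative coefficients it is nondecreasing, so $t \leq T$ gives $\mathfrak{E}_{1-\alpha}[c\,t^{1-\alpha}] \leq \mathfrak{E}_{1-\alpha}[c\,T^{1-\alpha}]$, and taking the supremum over $t \in [0,T]$ produces the asserted bound, whose right-hand side argument $c\,T^{1-\alpha} = T^{1-\alpha}\,M\,\Psi(R)$ matches the statement exactly. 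The finiteness claim follows at once, as $\mathfrak{E}_{1-\alpha}$ is an entire function and $D < \infty$.
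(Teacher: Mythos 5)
Your proposal is correct, and its skeleton coincides with the paper's proof: both obtain items~(\ref{item:well_i})--(\ref{item:well_ii}) from the measurability and integrability results established earlier (the paper invokes Lemma~\ref{lem:regularity} directly, you invoke Corollary~\ref{cor:mild.continuity}, which is itself derived from that lemma --- an immaterial difference), and both reduce item~(\ref{item:perturbation_iii}), via the identical triangle-inequality decomposition around the defect $d_t = x_t^1 - x_t^2 - \int_0^t S_{t-s}\,(F(x_s^1)-F(x_s^2))\,ds$, to the weakly singular integral inequality $u_t \le D + c\int_0^t (t-s)^{-\alpha} u_s \, ds$ with $c = \big(\sup_{s\in(0,T)} s^\alpha \|S_s\|_{L(W,V)}\big)\,\Psi(R)$. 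The genuine divergence is at the last step: the paper concludes by citing the generalized Gronwall inequality of Henry~\cite[Lemma~7.1.1]{h81} together with Corollary~\ref{cor:mild.continuity} (the latter ensuring $D<\infty$), whereas you prove that Gronwall inequality from scratch, iterating the monotone Volterra operator $K$, evaluating the iterated kernels $(K^k 1)_t = (c\,\Gamma(1-\alpha))^k\, t^{k(1-\alpha)}/\Gamma(k(1-\alpha)+1)$ via the Beta-function identity, controlling the remainder $(K^n u)_t$ by the boundedness of $u$ on the compact interval, and summing the series to recover exactly the Mittag--Leffler factor $\mathfrak{E}_{(1-\alpha)}[c\,T^{1-\alpha}]$ of \eqref{eq:perturb}. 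Your induction, the telescoping of the Gamma factors, and the passage to the limit are all sound. What your route buys is self-containedness --- very much in the spirit of this paper --- and an explanation of where the constant $\mathfrak{E}_{(1-\alpha)}[\,\cdot\,]$ in the statement actually comes from; what it costs is roughly a page of computation that the paper compresses into a single citation.
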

\begin{proof}[Proof of Lemma~\ref{lemma:perturbation}]
First, note that 
Lemma~\ref{lem:regularity} establishes
items~\eqref{item:well_i}--\eqref{item:well_ii}.
It thus remains to prove item~\eqref{item:perturbation_iii}.
For this observe that the triangle inequality ensures that for all $ t \in [0,T] $ it holds that 
\begin{align}
\label{eq:per:triangle}
\begin{split}
  \|x_t^1 -x_t^2\|_V
&
  \leq 
  \left\|
    x_t^1 - \smallint_0^t S_{ t - s } \, F(x_s^1) \, ds 
    +  
    \smallint_0^t S_{ t - s } \, F(x_s^2) \, ds - x_t^2
  \right\|_V 
\\
& 
  + 
  \left\| 
    \smallint_0^t 
      S_{ t - s } \, \big[ F( x_s^1 ) - F( x_s^2 ) \big] 
    \, ds 
  \right\|_V\!.
\end{split}
\end{align}
Next note that for all $ t \in [0,T] $ it holds that
\begin{align}
\begin{split}
& \left\| 
    \smallint_0^t S_{t-s} \, \big[ F(x_s^1) - F(x_s^2) \big] \, ds \right\|_V \\
&\leq 
  \int_0^t \| S_{ t - s } \|_{ L(W, V ) } 
  \| F( x_s^1 ) - F( x_s^2 ) \|_W \, ds
\\
& \leq 
  \Psi\!\left(
    \sup_{ s \in [0,T] } 
    \left[ 
      \| x_s^1 \|_V + \| x_s^2 \|_V
    \right]
  \right) 
  \left[  
    \sup_{ s \in (0,T) } s^{ \alpha } \| S_s \|_{ L(W,V) } 
  \right] \\
  & \cdot
  \int_0^t 
    \left( t - s \right)^{ - \alpha } 
    \| x_s^1 - x_s^2 \|_V \, 
  ds
  .
\end{split}
\end{align}
Combining this with \eqref{eq:per:triangle} proves that for all $t \in [0,T]$ it holds that
\begin{align}
\begin{split}
\|x_t^1 -x_t^2\|_V &\leq \left\|x_t^1 -\smallint_0^t S_{t-s} \, F(x_s^1) \, ds  +  \smallint_0^t S_{t-s} \, F(x_s^2) \, ds- x_t^2\right\|_V \\
&
  +
  \Psi\!\left(
    \sup_{ s \in [0,T] } 
    \left[ 
      \| x_s^1 \|_V + \| x_s^2 \|_V
    \right]
  \right) 
  \left[  
    \sup_{ s \in (0,T) } s^{ \alpha } \| S_s \|_{ L(W,V) } 
  \right] \\
 & \cdot \int_0^t 
    \left( t - s \right)^{ - \alpha } 
    \| x_s^1 - x_s^2 \|_V \, 
  ds
  .
\end{split}
\end{align}
The generalized Gronwall inequality (see, e.g.,  Henry~\cite[Lemma~7.1.1]{h81}) and Corollary~\ref{cor:mild.continuity} hence establish \eqref{eq:perturb}.
The proof of Lemma~\ref{lemma:perturbation} is thus completed.
\end{proof}

\section{Uniqueness of mild solutions}
\label{sec:unique}

In this section  we present in Corollary~\ref{cor:local:unique} an elementary fact about uniqueness of mild solutions of certain nonlinear evolution equations. Corollary~\ref{cor:local:unique}
is an immediate consequence of  Lemma~\ref{lemma:perturbation} in Section~\ref{sec:perturbation} above.

\begin{cor}
\label{cor:local:unique}
Let $(V, \left\|\cdot\right\|_V)$ 
be a separable $\R$-Banach space,
let $(W,\left\|\cdot\right\|_W)$ 
be an $\R$-Banach space, 
let 
$ T \in (0, \infty) $, $ \tau \in (0,T] $, $ F \in C(V, W) $,  
$ x^1, x^2, o \in C( [0,\tau], V ) $,
and let $ S \colon (0,T) \to L(W,V) $ be a $\mathcal{B}((0,T)) \slash \mathcal{B}(L(W,V)) $-measurable function which
satisfies
for all $ r \in [0,\infty) $, $t \in [0,\tau]$, $k \in \{1,2\}$ that
\begin{multline}
\sup \!\left( \! \left\{\tfrac{\|F(v)-F(w)\|_W}{\|v-w\|_V} \colon v, w \in V, v \neq w, \|v\|_V + \|w\|_V \leq r\right\} \cup \{0\}  \right) \\
+ \left[\inf_{\alpha \in (0,1)}\sup_{s \in (0,T)} s^{\alpha} \|S_s\|_{L(W,V)} \right]< \infty
\end{multline}
and 
\begin{align}
  x_t^k = \int_0^t S_{t-s} \, F(x_s^k) \, ds + o_t.
\end{align}
Then it holds that $ x^1 = x^2 $.
\end{cor}

\section{Local existence of mild solutions}
\label{sec:local}

In this section we present in Theorem~\ref{thm:existence} a local and existence and uniqueness result for certain nonlinear evolution equations. Our proof of Theorem~\ref{thm:existence} is based on well-known techniques from the literature on evolution equations and uses the Banach fixed-point theorem, Corollary~\ref{cor:local:unique} from Section~\ref{sec:unique}, Corollary~\ref{cor:complete:R} from Section~\ref{sec:complete}, and Corollary~\ref{cor:mild.continuity} from Section~\ref{sec:continuity}.

\begin{theorem}
	\label{thm:existence}
	Let $(V, \left\|\cdot\right\|_V)$ 
	be a separable $\R$-Banach space,
	let $(W,\left\|\cdot\right\|_W)$ 
	be an $\R$-Banach space, 
	let $T \in (0, \infty)$, $F \in C(V, W)$, $o \in C([0,T],V)$,
	and let $ S \colon (0,T) \to L(W,V) $ be a $\mathcal{B}((0,T)) \slash \mathcal{B}(L(W,V)) $-measurable function which
	satisfies for all $r \in [0,\infty)$ that 
	\begin{multline}
	\sup \!\left( \! \left\{\tfrac{\|F(v)-F(w)\|_W}{\|v-w\|_V} \colon v, w \in V, v \neq w, \|v\|_V + \|w\|_V \leq r\right\} \cup \{0\}  \right) \\
	+ \left[ \inf_{\alpha \in (0,1)}\sup_{s \in (0,T)} s^{\alpha} \|S_s\|_{L(W,V)} \right] < \infty.
	\end{multline}
	Then there exists a real number $\tau \in (0,T]$ such that 
	there exists a unique  $x \in C( [0, \tau],  V)$ which satisfies that 
	\begin{enumerate}[(i)]
		\item
		\label{item:strongly} 
		it holds for all $ t \in (0,\tau] $ that the 
		function 
		$
		(0,t) \ni s \mapsto S_{ t - s } \, F( x_s ) \in V 
		$
		is strongly 
		$ \mathcal{B}( (0,t) ) \slash (V,\left\|\cdot\right\|_V)$-measurable,
		\item 
		\label{item:finite}
		it holds for all $ t \in [0, \tau] $ that 
		$ \int_0^t \| S_{t-s} \, F(x_s) \|_V \, ds < \infty $, 
		and
		\item
		\label{item:solution}
		it holds for all $ t \in [0, \tau] $ that 
		\begin{align}
		\label{eq:existence}
		x_t = \int_0^t S_{t-s} \, F(x_s) \, ds + o_t.
		\end{align}
	\end{enumerate}
\end{theorem}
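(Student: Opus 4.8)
The plan is to construct the solution on a short time interval $[0,\tau]$ by applying the Banach fixed-point theorem to the integral operator naturally associated with the mild formulation, and then to upgrade uniqueness from the fixed-point ball to all of $C([0,\tau],V)$ by invoking Corollary~\ref{cor:local:unique}.

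First I would fix $\alpha \in (0,1)$ with $C := \sup_{s \in (0,T)} s^{\alpha}\|S_s\|_{L(W,V)} < \infty$ (such an $\alpha$ exists by the finiteness of the infimum in the hypothesis), and record the standard consequence of local Lipschitz continuity that $F$ is bounded on bounded subsets of $V$: for every $v \in V$ one has $\|F(v)\|_W \leq \|F(0)\|_W + \Psi(\|v\|_V)\,\|v\|_V$, where $\Psi$ is the modulus from Lemma~\ref{lemma:perturbation}. Next I would choose a radius $R \in (0,\infty)$, take the center $o_0 \in V$, abbreviate $M := \|o_0\|_V + R$, and consider, for a parameter $\tau \in (0,T]$ to be fixed, the complete metric space $(\mathcal{E},\delta)$ furnished by Corollary~\ref{cor:complete:R} with $X = [0,\tau]$, $E = V$, $e = o_0$; that is, $\mathcal{E} = \{x \in C([0,\tau],V) \colon \sup_{t}\|x_t - o_0\|_V \leq R\}$.

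The heart of the argument is to define $\Phi \colon \mathcal{E} \to C([0,\tau],V)$ by $(\Phi(x))_t = \int_0^t S_{t-s}\,F(x_s)\,ds + o_t$ and to verify, for $\tau$ small enough, that $\Phi$ is a contracting self-map. Well-definedness and continuity of $\Phi(x)$ follow from Corollary~\ref{cor:mild.continuity} applied to $y = F \circ x \in C([0,\tau],W)$, which simultaneously yields items~\eqref{item:strongly}--\eqref{item:finite} for the eventual fixed point. For the self-map property I would estimate, using $\|S_{t-s}\|_{L(W,V)} \leq C\,(t-s)^{-\alpha}$ and the boundedness of $F$ on the ball of radius $M$,
\begin{equation*}
\sup_{t \in [0,\tau]} \|(\Phi(x))_t - o_0\|_V \leq \frac{C\,\tau^{1-\alpha}}{1-\alpha}\sup_{\|v\|_V \leq M}\|F(v)\|_W + \sup_{t \in [0,\tau]}\|o_t - o_0\|_V,
\end{equation*}
and observe that both summands tend to $0$ as $\tau \searrow 0$ (the second by continuity of $o$ at $0$), so the right-hand side is $\leq R$ once $\tau$ is small. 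For the contraction property I would estimate analogously, for $x,y \in \mathcal{E}$, using $\|x_s\|_V + \|y_s\|_V \leq 2M$ and monotonicity of $\Psi$,
\begin{equation*}
\sup_{t \in [0,\tau]}\|(\Phi(x))_t - (\Phi(y))_t\|_V \leq \frac{C\,\Psi(2M)\,\tau^{1-\alpha}}{1-\alpha}\,\delta(x,y),
\end{equation*}
so the prefactor is $< 1$ for $\tau$ sufficiently small.

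With such a $\tau$ fixed, the Banach fixed-point theorem produces a unique $x \in \mathcal{E}$ with $\Phi(x) = x$, which then satisfies items~\eqref{item:strongly}--\eqref{item:solution}. The only genuinely nontrivial point beyond this bookkeeping is that uniqueness must hold in all of $C([0,\tau],V)$, not merely inside the ball $\mathcal{E}$; this is exactly the content of Corollary~\ref{cor:local:unique}, which shows that any two elements of $C([0,\tau],V)$ solving the mild equation with the same forcing $o$ coincide. I expect the main obstacle to be organizing the choice of $\tau$ so that it secures the self-map and contraction inequalities at once while keeping the dependence of $M$, $\Psi(2M)$, and $\sup_{t}\|o_t - o_0\|_V$ on $\tau$ under control; fixing $R$ first and shrinking $\tau$ afterwards decouples these quantities and resolves the difficulty cleanly.
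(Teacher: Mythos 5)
Your proposal is correct and follows essentially the same route as the paper's proof: a Banach fixed-point argument on a closed ball in $C([0,\tau],V)$ (complete by Corollary~\ref{cor:complete:R}), with well-definedness and items~(i)--(ii) supplied by Corollary~\ref{cor:mild.continuity}, the self-map and contraction properties secured by shrinking $\tau$, and uniqueness in all of $C([0,\tau],V)$ upgraded via Corollary~\ref{cor:local:unique}. The only differences are cosmetic bookkeeping: the paper centers its ball at the zero function with radius $\sup_{s\in[0,T]}\|o_s\|_V+1$ and bounds $\Phi_\tau(0)$ directly, whereas you center at the constant function $o_0$ and use continuity of $o$ at $0$ for the self-map estimate.
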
 
\begin{proof}[Proof of Theorem~\ref{thm:existence}]
	Throughout this proof let $\Psi \colon [0,\infty) \to [0,\infty)$ be the function which satisfies for all $r \in [0,\infty)$ that
	\begin{align}
	\begin{split}
	&\Psi(r) \\
	&= \sup \left( \!\left\{\tfrac{\|F(v)-F(w)\|_W}{\|v-w\|_V} \colon v, w \in V, v \neq w, \|v\|_V + \|w\|_V \leq r\right\} \cup \{0\}  \right)\!,
	\end{split}
	\end{align}
	let $\alpha \in (0,1)$ be a real number which satisfies that
	\begin{equation}
	\sup_{t \in (0,T)} t^{\alpha} \|S_t\|_{L(W,V)} < \infty,
	\end{equation}
	let $R \in [0,\infty)$ be the real number given by $R= \sup_{s \in [0,T]} \|o_s\|_V$, let $\Xi_{\tau} \subseteq C([0,\tau],V), \tau \in (0,T]$, be the sets which satisfy for all $\tau \in (0,T]$ that 
	\begin{align}
	\begin{split}
	\Xi_{\tau} = \left\{x \in C([0,\tau],V): \sup\nolimits_{t \in [0,\tau]} \|x_t\|_V \leq R +1 \right\}\!, 
	\end{split}
	\end{align}
	and let $\Phi_{\tau} \colon C([0,\tau], V) \to C([0,\tau], V)$, $\tau \in (0,T]$, be the functions which satisfy for all $\tau \in (0,T]$, $x \in C([0,\tau], V)$,  $t \in [0,\tau]$ that
	\begin{align}
	(\Phi_{\tau} (x))(t) = \int_0^{ t } S_{t-s} \, F(x_s) \, ds + o_t.
	\end{align}
	Observe that Corollary~\ref{cor:mild.continuity} and the assumption that $o \in C([0,T],V)$ ensure that the functions $\Phi_{\tau}$, $\tau \in (0,T]$, 
	are well-defined.
	Moreover, observe that for all $\tau \in (0,T]$, $t \in [0, \tau]$ it holds that
	\begin{align}
	\begin{split}
	&
	\|(\Phi_{\tau} (0))(t)\|_V 
	\\ 
	&\leq \|o_t\|_V + \int_0^t \|S_{t-s} \, F(0)\|_V \, ds \leq \|o_t\|_V + \int_0^t \|S_{t-s}\|_{L(W,V)} \|F(0)\|_W \, ds\\
	& \leq  \|o_t\|_V + \big[\! \sup\nolimits_{s \in (0,T)} s^{\alpha} \|S_s\|_{L(W,V)} \big] \|F(0)\|_W \int_0^t (t-s)^{-\alpha} \, ds\\
	& \leq R + \frac{t^{(1-\alpha)}}{(1-\alpha)} \big[\! \sup\nolimits_{s \in (0,T)} s^{\alpha} \|S_s\|_{L(W,V)} \big] \|F(0)\|_W < \infty.
	\end{split}
	\end{align}
	Hence, we obtain that for all $\tau \in (0,T]$ it holds that 
	\begin{align}
	\label{eq:0:bound}
	\|\Phi_{\tau} (0)\|_{C([0,\tau],V)}\leq R + \frac{{\tau}^{(1-\alpha)}}{(1-\alpha)} \big[\! \sup\nolimits_{s \in (0,T)} s^{\alpha} \|S_s\|_{L(W,V)} \big] \|F(0)\|_W < \infty.
	\end{align}
	Next note that for all $\tau \in (0,T]$, $t \in [0, \tau]$, $x, y \in \Xi_{\tau}$ it holds that
	\begin{align}
	\begin{split}
	&\|(\Phi_{\tau}(x))(t) - (\Phi_{\tau}(y))(t)\|_V = \left\| \int_0^t S_{t-s}[F(x_s)-F(y_s)] \, ds \right\|_V\\
	& \leq \int_0^{t} \|S_{t-s}\|_{L(W,V)} \|F(x_s)- F(y_s)\|_V \, ds\\
	& 
	\leq \Psi(\sup\nolimits_{s \in [0,\tau]} [\|x_s\|_V + \|y_s\|_V]) 
	\big[\! \sup\nolimits_{s \in (0,T)} s^{\alpha} \|S_s\|_{L(W,V)} \big] 
	\\
	& \quad
	\cdot \int_0^{t} (t-s)^{-\alpha} \|x_s-  y_s\|_V \, ds\\
	& \leq \frac{t^{(1-\alpha)}}{(1-\alpha)} \Psi(2R+2) \big[\! \sup\nolimits_{s \in (0,T)} s^{\alpha} \|S_s\|_{L(W,V)} \big] \|x-y\|_{C([0,\tau],V)}<\infty.
	\end{split}
	\end{align}
	Therefore, we obtain that for all $\tau \in (0,T]$, $x, y \in \Xi_{\tau}$ 
	it holds that
	\begin{align}
	\label{eq:x-y:bound}
	\begin{split}
	&\|\Phi_{\tau}(x) - \Phi_{\tau}(y)\|_{C([0,\tau],V)} \\
	&\leq \frac{\tau^{(1-\alpha)}}{(1-\alpha)} \Psi(2R+2) \big[\! \sup\nolimits_{s \in (0,T)} s^{\alpha} \|S_s\|_{L(W,V)} \big]\|x-y\|_{C([0,\tau],V)} < \infty.
	\end{split}
	\end{align}
	Combining this and \eqref{eq:0:bound} ensures that for all $\tau \in (0,T]$, $ x \in \Xi_{\tau}$ it holds that
	\begin{align*}
	&  \|\Phi_{\tau}(x) \|_{C([0,\tau],V)} \leq \|\Phi_{\tau}(x) - \Phi_{\tau}(0)\|_{C([0,\tau],V)}  + \|\Phi_{\tau} (0)\|_{C([0,\tau],V)}
	\\
	&\leq  \frac{\tau^{(1-\alpha)}}{(1-\alpha)} \Psi(2R+2) \big[\! \sup\nolimits_{s \in (0,T)} s^{\alpha} \|S_s\|_{L(W,V)} \big]\|x\|_{C([0,\tau],V)} \numberthis
	\\
	&
	\quad
	+ R + \frac{{\tau}^{(1-\alpha)}}{(1-\alpha)} \big[\! \sup\nolimits_{s \in (0,T)} s^{\alpha} \|S_s\|_{L(W,V)} \big] \|F(0)\|_W
	\\
	& \leq R + \frac{\tau^{(1-\alpha)}}{(1-\alpha)} 
	\big[\! \sup\nolimits_{s \in (0,T)} s^{\alpha} \|S_s\|_{L(W,V)} \big]  \big[\Psi(2R+2) ( R + 1 ) +  \|F(0)\|_W \big] <\infty.
	\end{align*}
	The fact that $\alpha < 1$ and \eqref{eq:x-y:bound} therefore imply that there exists a real number $\tau \in (0,T]$ such that for all $x, y \in \Xi_{\tau}$ it holds that
	\begin{align}
	\begin{split}
	\|\Phi_{\tau}(x) \|_{C([0,\tau],V)} &\leq  R +1 
	\end{split}
	\end{align}
	and
	\begin{align}
	\label{eq:contraction}
	\|\Phi_{\tau}(x) - \Phi_{\tau}(y)\|_{C([0,\tau],V)} &\leq \tfrac{1}{2} \|x-y\|_{C([0,\tau],V)}.
	\end{align}
	This ensures that $\Phi_{\tau}(\Xi_{\tau}) \subseteq \Xi_{\tau}$. 
	The Banach fixed-point theorem, Corollary~\ref{cor:complete:R}, 
	and \eqref{eq:contraction} hence demonstrate that 
	there exists a unique function $x \in \Xi_{\tau}$  such that 
	\begin{equation}
	\Phi_{\tau}(x)=x
	.
	\end{equation}
	Combining this and Corollary~\ref{cor:local:unique} 
	establishes items~\eqref{item:strongly}--\eqref{item:solution}.
The proof of Theorem~\ref{thm:existence} is thus completed.
\end{proof}

\section{Local existence of maximal mild solutions}
\label{sec:main_result}

In this section we employ well-known techniques from the literature on evolution equations to establish in Theorem~\ref{thm:unique} and Corollary~\ref{cor:unique} the unique local existence of maximal mild solutions of the considered nonlinear evolution equations.

\begin{lemma}
\label{lemma:extension}
Let $(V, \left\|\cdot\right\|_V)$ 
be a separable $\mathbb{R}$-Banach space,
let $(W,\left\|\cdot\right\|_W)$ 
be an $\mathbb{R}$-Banach space, 
let $T \in (0, \infty)$, 
$\tau\in (0,T]$, 
$x \in C([0,\tau),V)$,
$o \in C([0,T],V)$, 
$F \in C(V, W)$,
assume
for all
$r \in [0,\infty)$
that
$\limsup_{s \nearrow \tau} \|x_s\|_V < \infty$ and
\begin{equation}
\sup \! \left(\! \left\{\tfrac{\|F(v)-F(w)\|_W}{\|v-w\|_V} \colon v, w \in V, v \neq w, \|v\|_V + \|w\|_V \leq r\right\} \cup \{0\}  \right) <\infty,
\end{equation}  
let $S \colon (0,T) \to L(W,V)$ be a $\mathcal{B}((0,T))\slash \mathcal{B}(L(W,V))$-measurable function, 
let $\phi \colon (0,T) \to (0,\infty)$ be a  function which satisfies
 that 
$ \limsup_{s \searrow 0} \phi(s)=0$
and
\begin{equation} 
\inf_{\alpha \in (0,1)} \sup_{s \in (0,T)} \left[s^{\alpha} \|S_s\|_{L(W,V)} 
+
s^{\alpha}  \sup_{u \in (s,T)}\! \left( \tfrac{\|S_{u} - S_{s} \|_{L(W,V)} }{ |\phi(u-s)| }	\right)\right] \!< \infty,
\end{equation}
and assume 
for all  $t \in [0,\tau)$ that  
$ \int_0^t \|S_{t-s} \, F(x_s) \|_V \, ds < \infty$
and
\begin{align}
\label{eq:unique}
x_t = \int_0^t S_{t-s} \, F(x_s) \, ds + o_t.
\end{align}
Then 
\begin{enumerate}[(i)]
	\item \label{item:extension:1}
there exists a unique  $y \in C( [0,\tau], V)$ which satisfies that
$y|_{[0, \tau)} = x$ and
	\item \label{item:extension:2}
it holds for all $t\in [0,\tau]$ that 
$ \int_0^t \|S_{t-s} \, F(y_s) \|_V \, ds < \infty$ and
\begin{equation}
y_t = \int_0^t S_{t-s} \, F(y_s) \, ds + o_t.
\end{equation}
\end{enumerate}
\end{lemma}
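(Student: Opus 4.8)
The plan is to show that the given mild solution $x$ on $[0,\tau)$ extends continuously to the right endpoint $\tau$ by proving that $\lim_{t\nearrow\tau}x_t$ exists in $V$, and then to verify that the extension inherits the mild-solution property at $\tau$ from the continuity of the corresponding integral term. First I would record the relevant a priori bounds. Since $x\in C([0,\tau),V)$ is continuous (hence bounded) on each compact subinterval $[0,\tau-\varepsilon]$ and satisfies $\limsup_{s\nearrow\tau}\|x_s\|_V<\infty$, it follows that $\rho:=\sup_{s\in[0,\tau)}\|x_s\|_V<\infty$. Writing $L$ for the local Lipschitz constant of $F$ on the ball of radius $\rho$ (finite by hypothesis), I obtain the uniform bound $\|F(x_s)\|_W\le\|F(0)\|_W+L\rho=:K<\infty$ for all $s\in[0,\tau)$. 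I would also fix $\alpha\in(0,1)$ and a constant $C\in[0,\infty)$ realising the infimum in the regularity hypothesis, so that $\|S_s\|_{L(W,V)}\le Cs^{-\alpha}$ for all $s\in(0,T)$ and $\|S_u-S_s\|_{L(W,V)}\le C\,s^{-\alpha}\,|\phi(u-s)|$ for all $0<s<u<T$.

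The crux of the argument is to prove that the map $I\colon[0,\tau)\to V$ given by $I(t)=\int_0^t S_{t-s}\,F(x_s)\,ds$ satisfies the Cauchy criterion as $t\nearrow\tau$. For $0\le t_1\le t_2<\tau$ I would split
\[
I(t_2)-I(t_1)=\int_{t_1}^{t_2}S_{t_2-s}\,F(x_s)\,ds+\int_0^{t_1}\big[S_{t_2-s}-S_{t_1-s}\big]F(x_s)\,ds.
\]
The first integral is bounded in norm by $KC\int_{t_1}^{t_2}(t_2-s)^{-\alpha}\,ds=\tfrac{KC}{1-\alpha}(t_2-t_1)^{1-\alpha}$. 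For the second, setting $\sigma=t_1-s$ so that $t_2-s=\sigma+(t_2-t_1)$, the difference bound yields $\|S_{t_2-s}-S_{t_1-s}\|_{L(W,V)}\le C(t_1-s)^{-\alpha}|\phi(t_2-t_1)|$, whence the second integral is bounded in norm by $\tfrac{KC}{1-\alpha}T^{1-\alpha}|\phi(t_2-t_1)|$. Since $\phi>0$ and $\limsup_{s\searrow0}\phi(s)=0$, both bounds tend to $0$ as $t_2-t_1\to0$, uniformly in $t_1,t_2$. Hence $I(t)$ is Cauchy as $t\nearrow\tau$, and completeness of $V$ together with the continuity of $o$ at $\tau$ shows that $\xi:=\lim_{t\nearrow\tau}x_t=\lim_{t\nearrow\tau}I(t)+o_\tau$ exists.

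With this limit in hand I would define $y\colon[0,\tau]\to V$ by $y|_{[0,\tau)}=x$ and $y_\tau=\xi$; this $y$ is continuous, and any continuous extension of $x$ must take the value $\lim_{t\nearrow\tau}x_t=\xi$ at $\tau$, which establishes item~\eqref{item:extension:1} including uniqueness. For item~\eqref{item:extension:2} I would apply Corollary~\ref{cor:mild.continuity} to the continuous function $F\circ y\in C([0,\tau],W)$ (its hypothesis being supplied by the first part of the regularity assumption): the corollary gives $\int_0^\tau\|S_{\tau-s}\,F(y_s)\|_V\,ds<\infty$ together with the continuity of the map $z\colon[0,\tau]\to V$ defined by $z_t=\int_0^t S_{t-s}\,F(y_s)\,ds+o_t$. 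Finally, since $y_s=x_s$ for every $s\in[0,\tau)$, the given identity for $x$ shows $y_t=z_t$ for all $t\in[0,\tau)$, and continuity of both $y$ and $z$ on $[0,\tau]$ forces $y_\tau=z_\tau$, which is precisely the mild-solution identity at $\tau$.

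The main obstacle is the Cauchy estimate in the second paragraph: everything hinges on controlling the translate difference $S_{t_2-s}-S_{t_1-s}$ uniformly over $s\in(0,t_1)$ while keeping the singular weight $(t_1-s)^{-\alpha}$ integrable, and this is exactly what the $\phi$-modulus regularity hypothesis on $S$ is designed to deliver.
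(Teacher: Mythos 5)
Your proposal is correct and follows essentially the same route as the paper's proof: the same decomposition of the increment into a tail integral over $[t_1,t_2]$ and a semigroup-difference integral over $[0,t_1]$, controlled by the $\phi$-modulus hypothesis, yields the Cauchy property as $t\nearrow\tau$, after which the extension is unique by continuity and item~(ii) follows from Corollary~\ref{cor:mild.continuity} together with passing to the limit $t\nearrow\tau$ in the mild-solution identity. The only cosmetic difference is that the paper runs an explicit $\varepsilon$--$\delta$ bookkeeping (via the quantities $\delta_\varepsilon$, $\gamma_\varepsilon$) and bounds $\|x_{t_2}-x_{t_1}\|_V$ directly including the $o$-increment, whereas you isolate the integral term and invoke uniformity of the modulus bounds, which is equivalent.
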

\begin{proof}[Proof of Lemma~\ref{lemma:extension}]
First, note that the assumption that $\limsup_{s \nearrow \tau} \|x_s\|_V < \infty$ and the assumption that $x\in C([0,\tau),V)$ ensure that
\begin{equation}
\sup_{s\in [0,\tau)} \|x_s\|_V<\infty.
\end{equation}	
Next 
let $\alpha\in (0,1)$ be a real number which satisfies that
\begin{align} \label{eq:extension:ass}
\sup_{s \in (0,T)} 
\left[ s^{\alpha} \|S_s\|_{L(W,V)} 
+ s^{\alpha}  \sup_{t \in (s,T)}\! \left( \tfrac{\|S_{t} - S_{s} \|_{L(W,V)} }{ |\phi(t-s)| }	\right)\right] \! <\infty,
\end{align}
let $R\in (0,\infty)$ be  the  real number given by
\begin{align} \label{eq:extension:R}
\begin{split}
R &= \left[ \sup_{s \in (0,T)} \left[ s^{\alpha} \|S_s\|_{L(W,V)} 
+ s^{\alpha}  \sup_{t \in (s,T)}\! \left( \tfrac{\|S_{t} - S_{s} \|_{L(W,V)} }{ |\phi(t-s)| }	\right)\right] \right] \\
& \quad + \left[\sup_{s\in [0,\tau)} \|x_s\|_V \right]  + \|F(0)\|_W ,
\end{split}
\end{align}
let $\Psi \colon [0,\infty) \to [0,\infty)$ be the function which satisfies for all $r \in [0,\infty)$ that
\begin{align}
\begin{split}
& \Psi(r) = \sup \left( \left\{\tfrac{\|F(v)-F(w)\|_W}{\|v-w\|_V} \colon v, w \in V, v \neq w, \|v\|_V + \|w\|_V \leq r\right\} \cup \{0\}  \right) \! ,
\end{split}
\end{align}
let $\delta_{\varepsilon} \in (0,\infty)$, $\varepsilon \in (0, \infty)$, be the real numbers which satisfy for all $\varepsilon \in (0, \infty)$ that
\begin{equation}
\label{eq:delta}
	\delta_{\varepsilon} 
	= 
	\min\!\left\{ 
		\tau,   \tfrac{(1-\alpha) \varepsilon }{3 R^2  \left(1
+\Psi(R)\right) \tau^{1-\alpha}}
	\right\} \! ,
\end{equation}
and let $\gamma_{\varepsilon} \in (0, \tau |\delta_{\varepsilon}|^{\nicefrac{1}{(1-\alpha)}}]$, $\varepsilon \in (0, \infty)$, be real numbers which satisfy for all $\varepsilon \in (0, \infty)$ that
\begin{equation}
\label{eq:gamma}
\sup_{s \in (0, \gamma_{\varepsilon})} \phi(s) \leq \delta_{\varepsilon}
\end{equation}
and
\begin{equation}
\label{eq:cont:o}
\sup_{s \in (0, \gamma_{\varepsilon})} \|o(\tau-s) - o_{\tau} \|_V \leq \tfrac{\varepsilon}{6} .
\end{equation}
Note that the triangle inequality implies that for all $t_1, t_2 \in [0,\tau)$ with $t_1 \leq t_2$ it holds that
\begin{align}
\label{eq:xCauchy}
\begin{split}
 & \|x_{t_2}- x_{t_1}\|_V 
 \\
 	& \leq
 \|o_{t_2} -o_{t_1}\|_V 
 + \left\|\int_0^{t_1} S_{t_2-s} F(x_s) - S_{t_1-s} F(x_s) \, ds \right\|_V
 	\\
	& \quad
+ \left\|\int_{t_1}^{t_2} S_{t_2-s} F(x_s) \, ds \right\|_V 
		\\
		& 
 \leq 
 \|o_{t_2} -o_{t_1}\|_V
 	\\
 	& \quad 
 + \int_0^{t_1} \| S_{t_2-s} - S_{t_1-s}\|_{L(W,V)} \big(\|F(x_s)-F(0)\|_W +\|F(0)\|_W \big) \, ds
 	\\
 	& \quad 
 + \int_{t_1}^{t_2} \| S_{t_2-s} \|_{L(W,V)} \big(\|F(x_s)-F(0)\|_W +\|F(0)\|_W \big) \, d s
.
\end{split}
\end{align}
Furthermore, observe that \eqref{eq:extension:R} and the fact that the function $\Psi \colon [0,\infty) \to [0,\infty)$ is non-decreasing demonstrate that for all
$s \in [0, \tau)$ it holds that
\begin{equation}
\begin{split}
&\|F(x_s)-F(0)\|_W +\|F(0)\|_W \\
&= \tfrac{\|F(x_s)-F(0)\|_W}{\|x_s\|_V} \big[ \|x_s\|_V \big] +\|F(0)\|_W\\
& \leq \Psi\big(\|x_s\|_V\big) \|x_s\|_V + \|F(0)\|_W\\
& \leq \Psi\bigg(\sup_{t\in [0,\tau)} \|x_t\|_V\! \bigg) \bigg[\sup_{t\in [0,\tau)} \|x_t\|_V\bigg] + \|F(0)\|_W\\
& \leq R \big( \Psi(R) + 1\big).
\end{split}
\end{equation}
This and \eqref{eq:extension:R} ensure that
for all $t_1, t_2 \in [0,\tau)$ with $t_1 \leq t_2$ it holds that
\begin{align*}  
\label{eq:secondterm}
	& 
\int_0^{t_1} \| S_{t_2-s} - S_{t_1-s}\|_{L(W,V)} \big(\|F(x_s)-F(0)\|_W +\|F(0)\|_W \big)  \, ds
	\\
&	\leq  R \big( \Psi(R) + 1\big) \phi(t_2-t_1) \int_0^{t_1} \left[ \tfrac{\| S_{t_2-s} - S_{t_1-s}\|_{L(W,V)}(t_1-s)^{\alpha}}{\phi(t_2-t_1)} \right] (t_1- s)^{-\alpha} \, ds\\ 
	& 
\leq  R \big( \Psi(R) + 1\big) \phi(t_2-t_1) 
\left[ \sup_{ \substack{u \in (0,T),\\ v \in (u,T)}} \! \left(  \tfrac{u^{\alpha} \|S_{v} - S_{u} \|_{L(W,V)}}{|\phi(v-u)|} \right)\right] 
 \int_0^{t_1} (t_1-s)^{-\alpha} \, ds
	\\
	& \leq   R^2 \big( \Psi(R) + 1 \big) \phi(t_2-t_1) \left[ \frac{\tau^{(1-\alpha)}}{(1-\alpha)} \right] \numberthis
\end{align*}
and 
\begin{align} \label{eq:thirdterm}
\begin{split}
& \int_{t_1}^{t_2} \| S_{t_2-s} \|_{L(W,V)} \left(\|F(x_s)-F(0)\|_W +\|F(0)\|_W \right) d s
	\\
	& \leq  R \big( \Psi(R) + 1\big) 
 \left[\sup_{u \in (0,T)} \left( u^{\alpha} \|S_u\|_{L(W,V)} \right) \right]  
 \int_{t_1}^{t_2} (t_2-s)^{-\alpha} \, ds
	\\
	&  
\leq R^2 \big(\Psi(R) +1\big) \left[ \frac{(t_2-t_1)^{(1-\alpha)}}{(1-\alpha)} \right].
\end{split}
\end{align}
Combining this with 
\eqref{eq:xCauchy} establishes that for all $t_1,t_2\in [0,\tau)$ it holds that
\begin{align}
\label{eq:xCauchy:new}
\begin{split}
 & \|x_{t_2}- x_{t_1}\|_V 
 		\\ 
 		& \leq
\|o_{t_2} -o_{t_1}\|_V 
+ R^2 \big(\Psi(R) +1 \big) \left(\frac{ |t_2-t_1|^{(1-\alpha)} }{(1-\alpha)} 
+ \frac{\phi(|t_2-t_1|) \,  \tau^{(1-\alpha)} }{(1-\alpha)} \right) \! .
\end{split}
\end{align}
In addition, observe that \eqref{eq:cont:o} and the triangle inequality imply that for all $\varepsilon \in (0, \infty)$, $t_1,t_2 \in (\tau-\gamma_{\varepsilon},\tau)$ it holds that
\begin{align} \label{eq:o}
\|o_{t_1}-o_{t_2}\|_V \leq \|o_{t_1} - o_{\tau}\|_V + \|o_{t_2} - o_{\tau} \|_V \leq \tfrac{\varepsilon}{3}.
\end{align}
This, \eqref{eq:delta}, and \eqref{eq:gamma} demonstrate that for all $\varepsilon \in (0, \infty)$, 
$t_1,t_2 \in (\tau-\gamma_{\varepsilon},\tau)$ it holds that
\begin{equation}
\begin{split}
 & \|x_{t_2}- x_{t_1}\|_V 
\\ 
& \leq \frac{\varepsilon}{3} + R^2 \big(\Psi(R) +1 \big) \left(\frac{ |\gamma_{\varepsilon}|^{(1-\alpha)} }{(1-\alpha)} 
+ \frac{\delta_{\varepsilon} \,  \tau^{(1-\alpha)} }{(1-\alpha)} \right)\\
& \leq \frac{\varepsilon}{3} + R^2 \big(\Psi(R) +1 \big) \left[ \frac{2\delta_{\varepsilon} \,  \tau^{(1-\alpha)} }{(1-\alpha)} \right] \leq \frac{\varepsilon}{3} + \frac{2\varepsilon}{3} = \varepsilon.
\end{split}
\end{equation}
Combining this 
with the fact that 
$
\sup_{s\in [0,\tau)} \|x_s\|_V \leq R
$ 
ensures that for every sequence $t_n \in [0,\tau)$, $n \in \N$, with $\limsup_{n\to\infty}|t_n-\tau|=0$ it holds  
\begin{enumerate}[(a)]
	\item that  $x_{t_n} \in V$, $n \in \N$, is a Cauchy sequence and
	\item that $\{x_{t_n} \in V \colon n\in \N \}\subseteq \{v \in V \colon \|v\|_V \leq R\}$.
\end{enumerate}
The fact that $\{v \in V \colon \|v\|_V \leq R\}$ is a closed subset of the $\R$-Banach space $(V,\left\|\cdot\right\|_V)$ hence implies that there exists a unique $w \in \{v \in V \colon \|v\|_V \leq R\}$ which satisfies  that
\begin{equation}
\label{eq:exist:w}
\limsup_{s\nearrow \tau} \|x_s-w\|_V=0.
\end{equation}
Next let $y \colon [0,\tau] \to V$ be 
 the function  which satisfies for all $t\in [0,\tau]$ that 
\begin{equation}
y_t =
\begin{cases}
x_t  &\colon   t \in [0,\tau)\\
w  &\colon   t=\tau
\end{cases}.
\end{equation}
Observe that \eqref{eq:exist:w} ensures that  $y \in C([0, \tau], V)$. This establishes item~\eqref{item:extension:1}.
It thus remains to prove item~\eqref{item:extension:2}.
For this observe that
Corollary~\ref{cor:mild.continuity} 
(with 
$T=\tau$, $\mathcal{S}=\mathcal{S}|_{(0,\tau)}$, and $y= \left([0,\tau] \ni t \mapsto F(y_t)\in V\right)$ in the notation of Corollary~\ref{cor:mild.continuity}) 
demonstrates that for all $t\in [0,\tau]$ it holds that $ \int_0^t \|S_{t-s} \, F(y_s) \|_V \, ds < \infty$ and 
\begin{equation}
y_t = \int_0^t S_{t-s} \, F(y_s) \, ds + o_t.
\end{equation}
This establishes item~\eqref{item:extension:2}.
The proof of Lemma~\ref{lemma:extension} is thus completed.
\end{proof}

\begin{lemma}
\label{lemma:open}
Let $(V, \left\|\cdot\right\|_V)$ 
be a separable $\mathbb{R}$-Banach space,
let $(W,\left\|\cdot\right\|_W)$ 
be an $\mathbb{R}$-Banach space, 
let $F \in C(V, W)$, $T \in (0, \infty)$, 
let  $J \subseteq [0,T]$ be a convex set which satisfies that  $0 \in J$ and $ 0<\sup(J)<T$, 
assume for all $r \in [0,\infty)$ that
\begin{equation}
\sup \left( \!\left\{\tfrac{\|F(v)-F(w)\|_W}{\|v-w\|_V} \colon v, w \in V, v \neq w, \|v\|_V + \|w\|_V \leq r\right\} \cup \{0\}  \right)<\infty,
\end{equation}
let $S \colon (0,T) \to L(W,V)$ be a $\mathcal{B}((0,T))\slash \mathcal{B}(L(W,V))$-measurable function which satisfies that
\begin{equation}
\inf_{\alpha \in (0,1)} \sup_{s \in (0,T)} s^{\alpha} \|S_s\|_{L(W,V)} < \infty,
\end{equation}
let $\mathcal{S} \colon [0,T] \to L(V)$ be a function
which satisfies for all 
 $t_1\in [0,T)$, $t_2 \in (0,T-t_1) $, $u \in V$ 
that $([0,T] \ni s \mapsto \mathcal{S}_s u \in V) \in C([0,T],V)$ and
$S_{t_1+t_2}=\mathcal{S}_{t_1} S_{t_2}$,   
let $o \in C([0,T],V)$, 
$x \in C(J,V)$ satisfy for all 
$t \in J$ 
that
 $ \int_0^t \|S_{t-s}$ $F(x_s) \|_V \, ds < \infty$ 
and
 \begin{align} \label{open:x}
 x_t = \int_0^t S_{t-s} \, F(x_s) \, ds + o_t,
 \end{align}
and assume for all convex sets $I \subseteq [0,T]$ with $I \supseteq J $ and all  $y \in C(I,  V)$ with 
$y|_{J}=x$, 
$\forall \, t \in I \colon \int_0^t \|S_{t-s} \, F(y_s) \|_V \, ds < \infty$, and 
$ \forall \, t \in I \colon  y_t = \int_0^t S_{t-s} \, F(y_s) \, ds $ $+ o_t$ that $I=J$. Then 
\begin{align} \label{open:halfopen}
J= [0,\sup(J)).
\end{align}
\end{lemma}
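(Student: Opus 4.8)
The plan is to argue by contradiction. Since $J \subseteq [0,T]$ is convex with $0 \in J$ and $\sup(J) = \tau$ for some $\tau \in (0,T)$, the set $J$ is an interval satisfying $[0,\tau) \subseteq J \subseteq [0,\tau]$, so either $J = [0,\tau)$ or $J = [0,\tau]$. To establish \eqref{open:halfopen} it therefore suffices to rule out the case $J = [0,\tau]$. Assuming $J = [0,\tau]$, the function $x$ is continuous on the compact interval $[0,\tau]$, so in particular $x_\tau \in V$ is well defined and \eqref{open:x} at $t = \tau$ reads $x_\tau = \int_0^\tau S_{\tau-s} F(x_s)\,ds + o_\tau$. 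The idea is to restart the equation at $\tau$, produce a solution on a strictly larger interval, and thereby contradict the maximality assumption.

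First I would define the shifted forcing term $\tilde{o} \colon [0, T-\tau] \to V$ by $\tilde{o}_r = \mathcal{S}_r(x_\tau - o_\tau) + o_{\tau+r}$; the continuity assumption on $([0,T] \ni s \mapsto \mathcal{S}_s u)$ together with $o \in C([0,T],V)$ gives $\tilde{o} \in C([0,T-\tau], V)$. Applying Theorem~\ref{thm:existence} with $T$ replaced by $T-\tau$, with $S$ replaced by its restriction to $(0,T-\tau)$, and with $o$ replaced by $\tilde{o}$ (the hypotheses on $F$ and on $S$ are inherited since $(0,T-\tau) \subseteq (0,T)$) would yield a real number $\sigma \in (0, T-\tau]$ and a function $z \in C([0,\sigma], V)$ with $z_r = \int_0^r S_{r-u} F(z_u)\,du + \tilde{o}_r$ for all $r \in [0,\sigma]$.

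Next I would glue. Taking $t_1 = 0$ in $S_{t_1+t_2} = \mathcal{S}_{t_1}S_{t_2}$ gives $\mathcal{S}_0 S_{\tau - s} = S_{\tau-s}$ for $s \in (0,\tau)$, and pulling the bounded operator $\mathcal{S}_0$ inside the Bochner integral yields $\mathcal{S}_0(x_\tau - o_\tau) = \mathcal{S}_0 \int_0^\tau S_{\tau-s} F(x_s)\,ds = \int_0^\tau S_{\tau-s} F(x_s)\,ds = x_\tau - o_\tau$, whence $z_0 = \tilde{o}_0 = \mathcal{S}_0(x_\tau - o_\tau) + o_\tau = x_\tau$. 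I would then set $y \colon [0, \tau+\sigma] \to V$ with $y_t = x_t$ for $t \in [0,\tau]$ and $y_t = z_{t-\tau}$ for $t \in [\tau, \tau+\sigma]$; the matching $z_0 = x_\tau$ ensures $y \in C([0,\tau+\sigma], V)$, and plainly $y|_J = x$ and $I := [0,\tau+\sigma] \supseteq J$ with $I$ convex and $\tau + \sigma \leq T$.

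It remains to verify that $y$ solves the original equation on $I$, which is the crux of the argument and the main obstacle. For $t = \tau + r \in (\tau, \tau+\sigma]$ and $s \in (0,\tau)$ I would write $t - s = (t-\tau) + (\tau - s)$ and invoke $S_{t-s} = \mathcal{S}_{t-\tau} S_{\tau - s}$; pulling the bounded operator $\mathcal{S}_{t-\tau}$ inside the integral gives $\int_0^\tau S_{t-s} F(x_s)\,ds = \mathcal{S}_{t-\tau}(x_\tau - o_\tau)$, while the substitution $s = \tau + u$ gives $\int_\tau^t S_{t-s} F(y_s)\,ds = \int_0^r S_{r-u} F(z_u)\,du$. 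Inserting these into the equation for $z$ and using $y_s = x_s$ for $s \leq \tau$ produces $y_t = \int_0^\tau S_{t-s} F(y_s)\,ds + \int_\tau^t S_{t-s} F(y_s)\,ds + o_t = \int_0^t S_{t-s} F(y_s)\,ds + o_t$, the finiteness of all integrals being supplied by Corollary~\ref{cor:mild.continuity}. Together with the fact that $y$ already solves the equation on $[0,\tau]$, this exhibits a solution on $I \supsetneq J$ extending $x$, contradicting the maximality assumption; hence $J = [0,\tau) = [0,\sup(J))$. The delicate point is precisely this last verification, which hinges on the semigroup relation $S_{t_1+t_2} = \mathcal{S}_{t_1} S_{t_2}$, on the commutation of the bounded operators $\mathcal{S}_{t-\tau}$ with the Bochner integral, and on carefully tracking the integrability afforded by Corollary~\ref{cor:mild.continuity}.
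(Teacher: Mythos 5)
Your proposal is correct and follows essentially the same route as the paper's proof: argue by contradiction assuming $\tau=\sup(J)\in J$, restart the equation at $\tau$ by applying Theorem~\ref{thm:existence} on $[0,T-\tau]$ with the shifted forcing term $t\mapsto \mathcal{S}_t(x_\tau-o_\tau)+o_{\tau+t}$, glue the two solutions (using $\mathcal{S}_0 S_{\tau-s}=S_{\tau-s}$ to match at $\tau$), and verify via $S_{t-s}=\mathcal{S}_{t-\tau}S_{\tau-s}$ and commutation of $\mathcal{S}_{t-\tau}$ with the Bochner integral that the glued function solves the equation on $[0,\tau+\sigma]\supsetneq J$, contradicting maximality. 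The only cosmetic difference is that you invoke Corollary~\ref{cor:mild.continuity} for the integrability of the glued solution where the paper bounds the integral directly by $\|\mathcal{S}_{t-\tau}\|_{L(V)}\int_0^\tau\|S_{\tau-s}F(x_s)\|_V\,ds$; both are valid.
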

\begin{proof}[Proof of Lemma~\ref{lemma:open}]
Throughout this proof let $\tau\in [0,T)$ be the real number given by $\tau= \sup(J)$. We prove Lemma~\ref{lemma:open} by contradiction. We thus assume that $\tau \in J$. This ensures that 
\begin{align} \label{open:closed}
J=[0,\tau].
\end{align}
Next note that Theorem~\ref{thm:existence} 
(with $T=T-\tau$, 
 $F=F$, 
$o= ([0,T-\tau] \ni t \mapsto \mathcal{S}_t(x_{\tau}-o_{\tau}) + o_{\tau+t} \in V) \in C([0,T-\tau],V)$,
$S=S|_{(0,T-\tau)}$ in the notation of Theorem~\ref{thm:existence}) 
demonstrates that there exist a real number $\varepsilon \in (0,T-\tau]$ and a  function $y \in C( [0, \varepsilon],  V)$ such that
 \begin{enumerate}[(i)]
 	\item  for all $ t \in (0,\varepsilon] $ it holds that  the 
 	function 
 	$
 	(0,t) \ni s \mapsto S_{ t - s } F( y_s ) \in V 
 	$
 	is strongly 
 	$ \mathcal{B}( (0,t) ) \slash (V,\left\|\cdot\right\|_V)$-measurable,
 	\item  for all $ t \in [0, \varepsilon] $ it holds  that $ \int_0^t \| S_{t-s} \, F(y_s) \|_V \, ds < \infty $,
 	and
 	\item  for all $ t \in [0, \varepsilon] $ it holds  that
 		\begin{align}
 	\label{eq:open:existence}
 	y_t = \int_0^t S_{t-s} \, F(y_s) \, ds + \mathcal{S}_t(x_{\tau}-o_{\tau}) + o_{\tau+t}.
 	\end{align}
 \end{enumerate}
In the next step let $u\colon [0,\tau + \varepsilon] \to V$ be the function which satisfies for all $t\in [0,\tau + \varepsilon]$  that
		\begin{align} \label{eq:open:u}
		u_t=\begin{cases} x_t  &\colon   t\in J  \\
		y_{t-\tau} &\colon  t\in (\tau,\tau+\varepsilon]
		\end{cases}.
		\end{align}
Observe that  \eqref{open:x} and the hypothesis that $\tau\in J$  ensure that
\begin{equation}
\label{eq:x:tau}
x_\tau =\int_{0}^{\tau} S_{\tau-s} \, F(x_s) \, ds + o_\tau.
\end{equation}
Combining this and \eqref{eq:open:existence} implies that for all $t\in [0,\varepsilon]$ it holds that
\begin{align} \label{eq:open:y}
\begin{split}
y_t &= \int_{0}^{t} S_{t-s} \, F(y_s) \, ds + \mathcal{S}_t \!  \int_{0}^{\tau} S_{\tau-s} \, F(x_s) \, ds + o_{\tau+t}\\
& = \int_{0}^{t} S_{t-s} \, F(y_s) \, ds + \int_{0}^{\tau} S_{\tau+t-s} \, F(x_s) \, ds + o_{\tau+t}.
\end{split}
\end{align}
This and \eqref{eq:x:tau} assure that $y_0=x_\tau$.
Hence, we obtain that for all $t\in (\tau,\tau+\varepsilon]$ it holds that $u\in C([0,\tau+\varepsilon], V)$ and 
\begin{align} \label{eq:open:useful}
	\begin{split}
& \int_0^t \|S_{t-s} \, F(u_s) \|_V \, d s  
	\\
	& = 
\int_0^\tau \|S_{t-s} \, F(x_s) \|_V \, d s + \int_{\tau}^t \|S_{t-s} \, F(y_{s-\tau}) \|_V \, d s
	\\
	& = 
\int_0^\tau \|\mathcal{S}_{t-\tau} S_{\tau-s} \, F(x_s) \|_V \, d s + \int_0^{t-\tau} \! \|S_{t-\tau -s} \, F(y_{s}) \|_V \, d s
	\\
	& \leq 
\|\mathcal{S}_{t-\tau}\|_{L(V)} \! \int_0^\tau \|S_{\tau-s} \, F(x_s) \|_V \, d s + \int_0^{t-\tau} \! \|S_{t-\tau -s} \, F(y_{s}) \|_V \, d s
 < \infty.
	\end{split}
\end{align}
Furthermore, observe that \eqref{eq:open:y} implies that for all $t\in(\tau, \tau+\varepsilon]$ it holds that
\begin{align}
\label{eq:u_t}
	\begin{split}
	u_t  
	& = y_{t-\tau} 
	\\
	& = 
		\int_0^{t-\tau} \! S_{t-\tau-s} \, F(y_s) \, d s + \int_0^{\tau} S_{t-s} \, F(x_s) \, d s + o_{t}\\
	& = 	
		\int_\tau^{t} S_{t-s} \, F(y_{s-\tau}) \, d s + \int_0^{\tau} S_{t-s} \, F(u_s) \, d s + o_{t}\\
	& = 
	\int_0^{t} S_{t-s} \, F(u_s) \, d s + o_{t}.
\end{split}
\end{align}
Next let $ I \subseteq [0, T]$ be the set given by $I=[0,\tau + \varepsilon]$.
Note that \eqref{eq:u_t}
and  \eqref{eq:open:u} establish that 
$I \supsetneq J=[0,\tau]$, $I\subseteq [0,T]$, $u \in C(I,V)$, $u|_J= x$, $ \forall \, t \in I \colon \int_0^t \|S_{t-s} \, F(u_s) \|_V \, ds < \infty$, and $ \forall \, t \in I \colon  u_t = \int_0^t S_{t-s} \, F(u_s) \, ds + o_t$. The proof of Lemma~\ref{lemma:open} is thus completed.
\end{proof}

\begin{theorem}
\label{thm:unique}
Let $(V, \left\|\cdot\right\|_V)$ 
be a separable $\mathbb{R}$-Banach space,
let $(W,\left\|\cdot\right\|_W)$ 
be an $\mathbb{R}$-Banach space,
let $T \in (0, \infty)$,  $F \in C(V, W)$, $o \in C([0,T],V)$,  
let $S \colon (0,T) \to L(W,V)$ be a $\mathcal{B}((0,T))\slash \mathcal{B}(L(W,V))$-measurable function, 
let $\mathcal{S} \colon [0,T] \to L(V)$ be a $\mathcal{B}([0,T])\slash \mathcal{B}(L(V))$-measurable function, 
let $\phi \colon (0,T) \to (0,\infty)$ be a function,
and assume for all $r \in [0,\infty)$, $t_1 \in [0,T)$, $t_2\in (0,T-t_1)$, 
$v \in V$ that 
$([0,T] \ni t  \mapsto \mathcal{S}_t v \in V) \in C([0,T],V)$,
$S_{t_1+t_2}=\mathcal{S}_{t_1} S_{t_2}$, 
$ \sup \big( \big\{\frac{\|F(v)-F(w)\|_W}{\|v-w\|_V} \colon v, w \in V, v \neq w, \|v\|_V + \|w\|_V \leq r\big\} \cup \{0\}  \big) + 
\inf_{\alpha \in (0,1)} \sup_{s \in (0,T), t \in (s,T)}  [s^{\alpha} (\|S_s\|_{L(W,V)} +
\|S_{t} - S_{s} \|_{L(W,V)}|\phi(t-s)|^{-1})] < \infty$, 
and $ \limsup_{t \searrow 0} \phi(t)=0$.
Then there exists a unique convex set $J \subseteq [0,T]$  with $0 \in I$ such that
\begin{enumerate}[(i)]
	\item \label{item:unique:1} 
there exists a unique $x \in C(J, V)$ which satisfies for all $t \in J$ that 
\begin{align}
\int_0^t \|S_{t-s} \, F(x_s) \|_V \, ds < \limsup_{s \nearrow \sup(J)} \left[\frac{1}{(T-s)}+ \|x_s\|_V\right]\!= \infty
\end{align}
and $x_t = \int_0^t S_{t-s} \, F(x_s) \, ds + o_t$ and 
	\item \label{item:unique:2}
for all convex sets $I \subseteq [0,T]$  and all $y \in C(I, V)$ with $I \supseteq J $,
$\forall \, t \in I \colon \int_0^t \|S_{t-s} \, F(y_s) \|_V \, ds < \infty$, and 
$ \forall \, t \in I \colon  y_t = \int_0^t S_{t-s} \, F(y_s) \, ds + o_t$ it holds that $x=y$.
\end{enumerate}
\end{theorem}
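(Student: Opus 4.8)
The plan is to build $J$ as the maximal interval of existence by gluing together all local solutions, and then to read off its endpoint behaviour from Lemma~\ref{lemma:open} and Lemma~\ref{lemma:extension}. First I would apply Theorem~\ref{thm:existence} to produce a solution on some $[0,\tau_0]$ with $\tau_0\in(0,T]$, so that the family of convex sets $I\subseteq[0,T]$ with $0\in I$ carrying a mild solution is nonempty. I would let $J$ be the union of all such sets; since each is an interval containing $0$, the set $J$ is convex with $0\in J$. By Corollary~\ref{cor:local:unique} any two of these local solutions agree on the intersection of their domains, so they glue to a single function $x\colon J\to V$. As continuity and the defining integral identity are local in $t$, one checks that $x\in C(J,V)$ and that for every $t\in J$ both $\int_0^t\|S_{t-s}\,F(x_s)\|_V\,ds<\infty$ and $x_t=\int_0^t S_{t-s}\,F(x_s)\,ds+o_t$ hold. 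By construction $J$ is maximal: if $y$ solves the equation on a convex $I\supseteq J$, then $I$ is itself one of the defining sets, whence $I\subseteq J$ and thus $I=J$, while Corollary~\ref{cor:local:unique} gives $y=x$; this is precisely item~\eqref{item:unique:2}, and specialising to $I=J$ also yields the uniqueness of $x$ asserted in item~\eqref{item:unique:1}.

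Next I would establish the blow-up identity by distinguishing two cases according to the value of $\sup(J)$. If $\sup(J)=T$, then $\tfrac{1}{T-s}\to\infty$ as $s\nearrow\sup(J)$, so $\limsup_{s\nearrow\sup(J)}[\tfrac{1}{T-s}+\|x_s\|_V]=\infty$ is automatic. If instead $\sup(J)<T$, then $\sup(J)\geq\tau_0>0$ and the maximality of $J$ is exactly the hypothesis of Lemma~\ref{lemma:open}, which forces $J=[0,\sup(J))$. I would then argue by contradiction: were $\limsup_{s\nearrow\sup(J)}\|x_s\|_V<\infty$, Lemma~\ref{lemma:extension} (with $\tau=\sup(J)$ and the given $\phi$) would extend $x$ continuously to a solution on $[0,\sup(J)]$, and a further application of Theorem~\ref{thm:existence} at time $\sup(J)$ (transporting the value $x_{\sup(J)}$ through $\mathcal{S}$, as in the proof of Lemma~\ref{lemma:open}) would produce a strictly larger solution interval, contradicting maximality. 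Hence $\limsup_{s\nearrow\sup(J)}\|x_s\|_V=\infty$, and since $\tfrac{1}{T-s}$ stays bounded near $\sup(J)<T$, the required limsup is again $\infty$. This completes item~\eqref{item:unique:1}.

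For the uniqueness of the set $J$ itself, I would take any second convex set $J'$ with $0\in J'$ satisfying (i)--(ii) with solution $x'$. Two intervals containing $0$ are comparable, so without loss of generality $J\subseteq J'$; applying item~\eqref{item:unique:2} for $J$ to $I=J'$ and $y=x'$ then forces $J'=J$ and $x'=x$. Thus $J$ is unique, and the proof is complete once the two cases of the blow-up dichotomy are assembled.

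The step I expect to be the main obstacle is this blow-up dichotomy, and within it the correct bookkeeping needed to re-extend at the interior time $\sup(J)$: one must transport the terminal value through the semigroup, verify that the shifted forcing term $[0,T-\sup(J)]\ni t\mapsto\mathcal{S}_t(x_{\sup(J)}-o_{\sup(J)})+o_{\sup(J)+t}$ is continuous, and confirm that the hypotheses on $S$ and $\phi$ persist after the time shift so that Theorem~\ref{thm:existence} genuinely applies and its output is compatible with $x$ on the overlap. The remaining arguments are routine gluing justified by Corollary~\ref{cor:local:unique}.
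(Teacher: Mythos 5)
Your proposal is correct and follows essentially the same route as the paper: take $J$ to be the union of all convex sets containing $0$ that carry a mild solution, glue the local solutions into a single $x\in C(J,V)$ via Corollary~\ref{cor:local:unique}, obtain maximality (item (ii)) from the union construction, and prove the blow-up identity by contradiction using Lemma~\ref{lemma:open} to get $J=[0,\sup(J))$ and Lemma~\ref{lemma:extension} to extend $x$ to $[0,\sup(J)]$. Two minor remarks: the closed-interval extension $[0,\sup(J)]\supsetneq J$ already contradicts maximality by itself, so your further application of Theorem~\ref{thm:existence} at time $\sup(J)$ is redundant (the paper stops at this point, noting $[0,\sup(J)]$ belongs to the defining family); and your explicit argument for the uniqueness of the set $J$ itself (comparability of two intervals containing $0$ plus item (ii)) is a welcome addition that the paper's write-up leaves implicit.
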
 
\begin{proof}[Proof of Theorem~\ref{thm:unique}] 
Throughout this proof let $\mathcal{I}$ be the set 
given by
\begin{multline}
\mathcal{I} = \Bigg\{ I \subseteq [0, T] \colon \Bigg( \!\big(I \text{ is convex} \big) \!\wedge \!\big(0 \in I \big) \!\wedge\!  \bigg( \exists \, ! \, y\in C(I,V) \colon \Big[ \forall \, t \in I \colon \\
\Big(\! \big( \smallint\nolimits_0^t \|S_{t-s} \, F(y_s) \|_V \, ds < \infty \big)\! \wedge  \!\big( y_t = \smallint\nolimits_0^t S_{t-s} \, F(y_s) \, ds + o_t \big) \!\Big) \Big] \bigg)\! \Bigg)  \!\Bigg\}
\end{multline}
and let $J \subseteq [0, T]$ be the set given by 
\begin{equation}
\label{eq:J:first}
J=\bigcup_{I\in\mathcal{I} \cup\{\{0\}\}} I .
\end{equation}
Note that Theorem~\ref{thm:existence}  ensures
\begin{enumerate}[(a)]
	\item  that the set
	$\mathcal{I}$ is non-empty,
	\item\label{eq:J} that $J=\cup_{I\in\mathcal{I} } I$,
	and 
	\item\label{eq:J:0} that $J \neq \{0\}$.
\end{enumerate}
Moreover, observe that item~\eqref{eq:J} assures that for every $u \in J $ there exists a convex set $I \in \mathcal{I}$ with $u \in I$.   Hence, we obtain that for every $u \in J $ there exists a convex set  $I \subseteq [0, T]$ with $[0, u] \subseteq I$ such that 
there exists a unique function $y\in C(I,V)$ such that for all $t \in I$ it holds
 that 
 \begin{align}
 \int_0^t \|S_{t-s} \, F(y_s) \|_V \, ds < \infty \qquad \text{and} \qquad y_t = \int_0^t S_{t-s} \, F(y_s) \, ds + o_t.
 \end{align}
Corollary~\ref{cor:local:unique} therefore implies that 
there exists a unique  $x \in C(J,V)$ which satisfies for all $t \in J$  that 
$ \int_0^t \|S_{t-s} \, F(x_s) \|_V \, ds < \infty$  and 
\begin{equation}
\label{eq:x:exist}
x_t = \int_0^t S_{t-s} \, F(x_s) \, ds + o_t.
\end{equation}
In the next step we claim that
\begin{equation}
\label{eq:J:infinite}
\limsup_{s \nearrow \sup(J)} \left[\frac{1}{(T-s)}+ \|x_s\|_V\right]\!= \infty.
\end{equation}
We prove \eqref{eq:J:infinite} by contradiction. We thus assume that 
\begin{equation}
\label{eq:J:finite}
\limsup_{s \nearrow \sup(J)} \left[\frac{1}{(T-s)}+ \|x_s\|_V\right]\!< \infty.
\end{equation}
This assures that $\sup(J)<T$ and
\begin{align} \label{eq:unique:bound}
 \limsup_{s \nearrow \sup(J)}  \|x_s\|_V< \infty.
\end{align} 
Lemma~\ref{lemma:open}, item~\eqref{eq:J}, and item~\eqref{eq:J:0} hence establish that
\begin{equation}
\label{eq:J:open}
J=[0,\sup(J)).
\end{equation} 
Combining this, \eqref{eq:x:exist}, \eqref{eq:unique:bound}, and  Lemma~\ref{lemma:extension} ensures that $[0,\sup(J)] \in \mathcal{I}$. 
This and \eqref{eq:J:open} contradict to item~\eqref{eq:J}. This proves that
\begin{equation}
\label{eq:J:infinite:2}
\limsup_{s \nearrow \sup(J)} \left[\frac{1}{(T-s)}+ \|x_s\|_V\right]\!= \infty.
\end{equation}
Combining  item~\eqref{eq:J}, item~\eqref{eq:J:0}, and \eqref{eq:x:exist} hence
establishes items~\eqref{item:unique:1}--\eqref{item:unique:2}.
The proof of Theorem~\ref{thm:unique} is thus completed.
\end{proof}

\begin{cor}
\label{cor:unique}
Let $(V, \left\|\cdot\right\|_V)$ 
be a separable $\mathbb{R}$-Banach space,
let $(W,\left\|\cdot\right\|_W)$ 
be an $\mathbb{R}$-Banach space,
let $T \in (0, \infty)$,  $F \in C(V, W)$, $o \in C([0,T],V)$,  
let $S \colon (0,T) \to L(W,V)$ be a $\mathcal{B}((0,T))\slash \mathcal{B}(L(W,V))$-measurable function, 
let $\mathcal{S} \colon [0,T] \to L(V)$ be a $\mathcal{B}([0,T])\slash \mathcal{B}(L(V))$-measurable function, 
let $\phi \colon (0,T) \to (0,\infty)$ be a function,
and assume for all $r \in [0,\infty)$, $t_1 \in [0,T)$, $t_2\in (0,T-t_1)$, 
$v \in V$ that 
$([0,T] \ni t  \mapsto \mathcal{S}_t v \in V) \in C([0,T],V)$,
$S_{t_1+t_2}=\mathcal{S}_{t_1} S_{t_2}$, 
$ \sup \big( \big\{\frac{\|F(v)-F(w)\|_W}{\|v-w\|_V} \colon v, w \in V, v \neq w, \|v\|_V + \|w\|_V \leq r\big\} \cup \{0\}  \big) + 
\inf_{\alpha \in (0,1)} \sup_{s \in (0,T), t \in (s,T)} [ s^{\alpha} (\|S_s\|_{L(W,V)} +
\|S_{t} - S_{s} \|_{L(W,V)}|\phi(t-s)|^{-1})] < \infty$, 
and $ \limsup_{t \searrow 0} \phi(t)=0$.
Then there exists a unique convex set $J \subseteq [0,T]$ with $\{0\} \subsetneq J$ such that
\begin{enumerate}[(i)]
	\item \label{item:cor:unique:1} 
	there exists a unique  $x \in C(J, V)$  which satisfies for all $t \in J$ that 
	\begin{align}
	\int_0^t \|S_{t-s} \, F(x_s) \|_V \, ds < \limsup_{s \nearrow \sup(J)} \left[\frac{1}{(T-s)}+ \|x_s\|_V\right]\!= \infty
	\end{align}
	and $x_t = \int_0^t S_{t-s} \, F(x_s) \, ds + o_t$, 
	\item \label{item:cor:unique:2}
	for all convex sets $I \subseteq [0,T]$  and all  $y \in C(I, V)$ with $I \supseteq J $,
	$\forall \, t \in I \colon \int_0^t \|S_{t-s} \, F(y_s) \|_V \, ds < \infty$, and 
	$ \forall \, t \in I \colon  y_t = \int_0^t S_{t-s} \, F(y_s) \, ds + o_t$ it holds that $x=y$, and
	\item \label{item:cor:unique:last} for all  $y \in C(J, V)$   with $\forall \, t \in J \colon \int_0^t \|S_{t-s} \, F(y_s) \|_V \, ds < \infty$,  
	$ \forall \, t \in J \colon  y_t = \int_0^t S_{t-s} \, F(y_s) \, ds + o_t$, and $\limsup_{s \nearrow \sup(J)} \|y_s\|_V < \infty$  it holds that $J = [0, T]$.
\end{enumerate}
\end{cor}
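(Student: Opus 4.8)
The plan is to obtain Corollary~\ref{cor:unique} directly from Theorem~\ref{thm:unique}, whose hypotheses are word-for-word those of the corollary, and then to supply only the extra conclusion~\eqref{item:cor:unique:last}. First I would invoke Theorem~\ref{thm:unique} to produce the unique convex set $J\subseteq[0,T]$ with $0\in J$ together with the unique $x\in C(J,V)$ obeying the blow-up relation and the fixed-point equation. Since items~\eqref{item:cor:unique:1} and~\eqref{item:cor:unique:2} of the corollary are verbatim items~\eqref{item:unique:1} and~\eqref{item:unique:2} of Theorem~\ref{thm:unique}, this already yields those two items. For the uniqueness of $J$ in the class $\{0\}\subsetneq J$, I would note that the theorem's maximal set satisfies $J\neq\{0\}$, hence $\{0\}\subsetneq J$, and that any convex $J'$ with $\{0\}\subsetneq J'$ satisfying~\eqref{item:cor:unique:1}--\eqref{item:cor:unique:2} in particular satisfies $0\in J'$ and hence, by the theorem's uniqueness, coincides with $J$.

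It then remains to prove~\eqref{item:cor:unique:last}. Let $y\in C(J,V)$ be a solution on $J$ with $\limsup_{s\nearrow\sup(J)}\|y_s\|_V<\infty$. Applying item~\eqref{item:cor:unique:2} with the admissible choice $I=J$ gives $y=x$, so that $\limsup_{s\nearrow\sup(J)}\|x_s\|_V<\infty$. The blow-up relation from item~\eqref{item:cor:unique:1}, namely $\limsup_{s\nearrow\sup(J)}[\frac{1}{(T-s)}+\|x_s\|_V]=\infty$, combined with the boundedness of $\|x_s\|_V$ near $\sup(J)$, forces $\limsup_{s\nearrow\sup(J)}\frac{1}{(T-s)}=\infty$. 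As $s\mapsto\frac{1}{(T-s)}$ stays bounded whenever $s$ approaches a point strictly below $T$, this is only possible if $\sup(J)=T$.

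Finally I would upgrade $\sup(J)=T$ to $J=[0,T]$. Because $J$ is convex with $0\in J$ and $\sup(J)=T$, the only candidates are $J=[0,T)$ and $J=[0,T]$, so it suffices to exclude the half-open case. If $J=[0,T)$, then $x\in C([0,T),V)$ solves the equation on $[0,T)$ and satisfies $\limsup_{s\nearrow T}\|x_s\|_V<\infty$; these are exactly the hypotheses of Lemma~\ref{lemma:extension} with $\tau=T$, the $\phi$-dependent regularity condition of that lemma being part of the corollary's assumptions. Lemma~\ref{lemma:extension} then provides an extension $\tilde{x}\in C([0,T],V)$ solving the integral equation on all of $[0,T]$. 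Applying item~\eqref{item:cor:unique:2} with $I=[0,T]\supseteq J$ and $y=\tilde{x}$ would force $x=\tilde{x}$, which is impossible since $x$ and $\tilde{x}$ have the distinct domains $[0,T)$ and $[0,T]$. This contradiction rules out $J=[0,T)$, so $J=[0,T]$, which establishes~\eqref{item:cor:unique:last}.

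The bulk of the argument is a transcription of Theorem~\ref{thm:unique} plus a one-line limit computation, so I expect the only real obstacle to lie in the last paragraph: one must check that the boundedness of $x$ up to $\sup(J)=T$ genuinely triggers Lemma~\ref{lemma:extension} (in particular that its $\phi$-regularity hypothesis is available with $\tau=T$) and that the resulting extension to $[0,T]$ really does contradict the maximality recorded in item~\eqref{item:cor:unique:2}.
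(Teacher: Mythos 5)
Your proposal is correct and follows essentially the same route as the paper's own proof: invoke Theorem~\ref{thm:unique} for items (i)--(ii) and $\{0\}\subsetneq J$, identify $y=x$ via item (ii) with $I=J$, use the blow-up relation plus boundedness to force $\sup(J)=T$ (hence $J=[0,T)$ if not all of $[0,T]$), and then derive a contradiction by extending $x$ to $[0,T]$ with Lemma~\ref{lemma:extension} and appealing to the maximality in item (ii). The only difference is cosmetic: the paper wraps the whole of item (iii) in a single proof by contradiction, whereas you first derive $\sup(J)=T$ directly and reserve the contradiction for excluding the half-open case.
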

\begin{proof}[Proof of Corollary~\ref{cor:unique}]
First, note that Theorem~\ref{thm:unique} ensures that there exists a unique convex set $J \subseteq [0,T]$ with $0 \in J$ which satisfies that
\begin{enumerate}[(a)]
	\item \label{item:proof:unique:1} 
	there exists a unique  $x \in C(J, V)$  which satisfies for all $t \in J$ that 
	\begin{align}
	\label{eq:infty}
	\int_0^t \|S_{t-s} \, F(x_s) \|_V \, ds < \limsup_{s \nearrow \sup(J)} \left[\frac{1}{(T-s)}+ \|x_s\|_V\right]\!= \infty
	\end{align}
	and $x_t = \int_0^t S_{t-s} \, F(x_s) \, ds + o_t$ and 
	\item \label{item:proof:unique:2}
	for all convex sets $I \subseteq [0,T]$  and all  $y \in C(I, V)$  with $I \supseteq J $,
	$\forall \, t \in I \colon \int_0^t \|S_{t-s} \, F(y_s) \|_V \, ds < \infty$, and 
	$ \forall \, t \in I \colon  y_t = \int_0^t S_{t-s} \, F(y_s) \, ds + o_t$ it holds that $x=y$.
\end{enumerate}	
Observe that  \eqref{eq:infty} demonstrates that
\begin{equation}
\{0\} \subsetneq J.
\end{equation} 
It thus remains to prove that for all  $y \in C(J, V)$   with $\forall \, t \in J \colon \int_0^t \|S_{t-s} \, F(y_s) \|_V $ $  ds < \infty$,  
$ \forall \, t \in J \colon  y_t = \int_0^t S_{t-s} \, F(y_s) \, ds + o_t$, and $\limsup_{s \nearrow \sup(J)} \|y_s\|_V < \infty$  it holds that 
\begin{equation}
\label{eq:J:global}
J = [0, T].
\end{equation}
We prove \eqref{eq:J:global} by contradiction. We thus assume that that there exists   $y \in C(J, V)$    which satisfies for all $t \in J$ that $ \int_0^t \|S_{t-s} \, F(y_s) \|_V \, ds < \infty$,  
$  y_t = \int_0^t S_{t-s} \, F(y_s) \, ds + o_t$, $\limsup_{s \nearrow \sup(J)} \|y_s\|_V < \infty$, and
\begin{equation}
\label{eq:J:noteq}
J \neq [0, T].
\end{equation}
Note that  the fact that $J$ is a convex set and item~\eqref{item:proof:unique:2} prove that $ y =x$. Combining this with the hypothesis that $\limsup_{s \nearrow \sup(J)} \|y_s\|_V < \infty$ 
 ensures that
\begin{equation}
\label{eq:x:bounded}
\limsup_{s \nearrow \sup(J)} \|x_s\|_V < \infty.
\end{equation}
This, \eqref{eq:J:noteq}, and 
item~\eqref{item:proof:unique:1}  imply that
\begin{equation}
\label{eq:J:neq}
J = [0, T).
\end{equation}
Lemma~\ref{lemma:extension} and \eqref{eq:x:bounded} hence assure that there exists  $z \in C( [0, T], V)$  such that for all $t \in [0, T]$ it holds that 
 \begin{align}
\int_0^t \|S_{t-s} \, F(z_s) \|_V \, ds < \infty \qquad \text{and} \qquad z_t = \int_0^t S_{t-s} \, F(z_s) \, ds + o_t.
\end{align}
Combining this and \eqref{eq:J:neq} contradicts to  item~\eqref{item:proof:unique:2}. The proof of Corollary~\ref{cor:unique} is thus completed.
\end{proof}

\bibliographystyle{acm}
\bibliography{bibfile} 

\def\polhk#1{\setbox0=\hbox{#1}{\ooalign{\hidewidth
  \lower1.5ex\hbox{`}\hidewidth\crcr\unhbox0}}}
\begin{thebibliography}{10}

\bibitem{Andersson2015}
{\sc {Andersson}, A., {Jentzen}, A., and {Kurniawan}, R.}
\newblock Existence, uniqueness, and regularity for stochastic evolution
  equations with irregular initial values.
\newblock {\em arXiv:1512.06899\/} (2015), 35 pages.
\newblock Revision requested from \emph{J.\ Math.\ Anal.\ Appl.}

\bibitem{Brezis2011}
{\sc Brezis, H.}
\newblock {\em Functional analysis, {S}obolev spaces and partial differential
  equations}.
\newblock Universitext. Springer, New York, 2011.

\bibitem{brzezniak1997stochastic}
{\sc Brze{\'z}niak, Z.}
\newblock On stochastic convolution in {B}anach spaces and applications.
\newblock {\em Stochastics: An International Journal of Probability and
  Stochastic Processes 61}, 3-4 (1997), 245--295.

\bibitem{cazenave1998introduction}
{\sc Cazenave, T., and Haraux, A.}
\newblock {\em An introduction to semilinear evolution equations}, vol.~13.
\newblock Oxford University Press on Demand, 1998.

\bibitem{dz92}
{\sc Da~Prato, G., and Zabczyk, J.}
\newblock {\em Stochastic equations in infinite dimensions}, vol.~44 of {\em
  Encyclopedia of Mathematics and its Applications}.
\newblock Cambridge University Press, Cambridge, 1992.

\bibitem{dz96}
{\sc Da~Prato, G., and Zabczyk, J.}
\newblock {\em Ergodicity for infinite-dimensional systems}, vol.~229 of {\em
  London Mathematical Society Lecture Note Series}.
\newblock Cambridge University Press, Cambridge, 1996.

\bibitem{EJentzenShen2016}
{\sc E, W., Jentzen, A., and Shen, H.}
\newblock Renormalized powers of {O}rnstein-{U}hlenbeck processes and
  well-posedness of stochastic {G}inzburg-{L}andau equations.
\newblock {\em Nonlinear Anal. 142\/} (2016), 152--193.

\bibitem{h81}
{\sc Henry, D.}
\newblock {\em Geometric theory of semilinear parabolic equations}, vol.~840 of
  {\em Lecture Notes in Mathematics}.
\newblock Springer-Verlag, Berlin, 1981.

\bibitem{JentzenKloeden2011}
{\sc Jentzen, A., and Kloeden, P.~E.}
\newblock {\em Taylor approximations for stochastic partial differential
  equations}, vol.~83 of {\em CBMS-NSF Regional Conference Series in Applied
  Mathematics}.
\newblock Society for Industrial and Applied Mathematics (SIAM), Philadelphia,
  PA, 2011.

\bibitem{JentzenKurniawan2015}
{\sc {Jentzen}, A., and {Kurniawan}, R.}
\newblock {Weak convergence rates for Euler-type approximations of semilinear
  stochastic evolution equations with nonlinear diffusion coefficients}.
\newblock {\em arXiv:1501.03539\/} (2015), 51 pages.

\bibitem{lions1969quelques}
{\sc Lions, J.~L.}
\newblock Quelques m{\'e}thodes de r{\'e}solution des problemes aux limites non
  lin{\'e}aires.

\bibitem{lunardi2012analytic}
{\sc Lunardi, A.}
\newblock {\em Analytic semigroups and optimal regularity in parabolic
  problems}.
\newblock Springer Science \& Business Media, 2012.

\bibitem{pazy2012semigroups}
{\sc Pazy, A.}
\newblock {\em Semigroups of linear operators and applications to partial
  differential equations}, vol.~44.
\newblock Springer Science \& Business Media, 2012.

\bibitem{roubivcek2013nonlinear}
{\sc Roub{\'\i}{\v{c}}ek, T.}
\newblock {\em Nonlinear partial differential equations with applications},
  vol.~153.
\newblock Springer Science \& Business Media, 2013.

\bibitem{sy02}
{\sc Sell, G.~R., and You, Y.}
\newblock {\em Dynamics of evolutionary equations}, vol.~143 of {\em Applied
  Mathematical Sciences}.
\newblock Springer-Verlag, New York, 2002.

\bibitem{Neerven2012}
{\sc van Neerven, J., Veraar, M., and Weis, L.}
\newblock Maximal {$L^p$}-regularity for stochastic evolution equations.
\newblock {\em SIAM J. Math. Anal. 44}, 3 (2012), 1372--1414.

\bibitem{Neerven2008}
{\sc van Neerven, J. M. A.~M., Veraar, M.~C., and Weis, L.}
\newblock Stochastic evolution equations in {UMD} {B}anach spaces.
\newblock {\em J. Funct. Anal. 255}, 4 (2008), 940--993.

\bibitem{weissler1979semilinear}
{\sc Weissler, F.~B.}
\newblock Semilinear evolution equations in {B}anach spaces.
\newblock {\em Journal of Functional Analysis 32}, 3 (1979), 277--296.

\end{thebibliography}
\end{document}